\documentclass[11pt]{article}
\usepackage{url}
\usepackage{amsfonts}
\usepackage{amssymb}
\usepackage{amsmath}
\usepackage{amsthm}
\usepackage[lofdepth,lotdepth]{subfig}
\usepackage{setspace,amssymb,verbatim}
\usepackage[margin=1in]{geometry}
\usepackage{graphicx}
\usepackage{rotating}
\usepackage{psfrag}
\usepackage{afterpage}
\usepackage{multirow}
\usepackage[lofdepth,lotdepth]{subfig}
\usepackage{tikz}

\theoremstyle{plain}
\newtheorem{theorem}{Theorem}[section]
\newtheorem{proposition}[theorem]{Proposition}
\newtheorem*{theorem*}{Theorem}
\newtheorem{corollary}[theorem]{Corollary}

\newtheorem*{lemma*}{Lemma}
\newtheorem{lemma}[theorem]{Lemma}

\numberwithin{equation}{section}

\theoremstyle{remark}

\newtheorem*{case*}{Case}

\theoremstyle{definition}

\usepackage{prettyref}
\newrefformat{sec}{Section \ref{#1}}
\newrefformat{chap}{Chapter \ref{#1}}
\newrefformat{prop}{Proposition \ref{#1}}
\newrefformat{prob}{Problem \ref{#1}}
\newrefformat{rem}{Remark \ref{#1}}
\newrefformat{cor}{Corollary \ref{#1}}
\newrefformat{cond}{Condition \ref{#1}}
\newrefformat{def}{Definition \ref{#1}}
\newrefformat{ex}{Example \ref{#1}}
\newrefformat{lem}{Lemma \ref{#1}}
\newrefformat{eq}{Equation \eqref{#1}}
\newrefformat{con}{condition (\ref{#1})}
\newrefformat{conj}{Conjecture \ref{#1}}
\newrefformat{fig}{Figure \ref{#1}}
\newrefformat{tab}{Table \ref{#1}}
\newrefformat{app}{Appendix \ref{#1}}

\newcommand{\F}{\mathbb{F}}

\newcommand{\I}{\mathcal{I}}
\newcommand{\U}{\mathcal{U}}
\newcommand{\cP}{\mathcal{P}}
\newcommand{\cC}{\mathcal{C}}
\newcommand{\bmu}{\boldsymbol{\mu}}

\def\adots{\mathinner{\mkern2mu\raise0pt\hbox{.}  
\mkern2mu\raise4pt\hbox{.}\mkern1mu
\raise7pt\vbox{\kern7pt\hbox{.}}\mkern1mu}}

\title{Real classes of finite special unitary groups}

\author{Amanda A. Schaeffer Fry\\ \small\textit{Department of Mathematical and Computer Sciences}\\\small\textit{Metropolitan State University of Denver}\\ \small\textit{Denver, CO 80217, USA}\\ \small\textit{aschaef6@msudenver.edu}
\and
C. Ryan Vinroot\\ \small\textit{Department of Mathematics}\\\small\textit{College of William and Mary}\\\small\textit{Williamsburg, VA 23187, USA}\\ \small\textit{vinroot@math.wm.edu}}
\date{}

\begin{document}

\maketitle

\begin{abstract} 
We classify all real and strongly real classes of the finite special unitary group $SU_n(q)$.  Unless $q \equiv 3($mod $4)$ and $n|4$, the classification of real classes is similar to that of the finite special linear group $SL_n(q)$.  We relate strong reality in $SU_n(q)$ to strong reality in the finite special orthogonal groups $SO^{\pm}_n(q)$, and we classify real and strongly real classes in the last case for the group $SO_n^{\pm}(q)$, when $q$ is odd and $n \equiv 2($mod $4)$.
\\
\\
\noindent 2010 {\it Mathematics Subject Classification: } 20G40, 20E45
\end{abstract}

\section{Introduction}

An element $g$ in a group $G$ is \emph{real} in $G$ if there is an element $h \in G$ such that $hgh^{-1} = g^{-1}$, and such an element $h$ will be called a \emph{reversing element}.  If the reversing element $h$ further satisfies $h^2 = 1$, then we say that the element $g$ is \emph{strongly real} in $G$.  An element $g \in G$ is real or strongly real if and only if each element in its $G$-conjugacy class is as well, and so we may speak of real and strongly real classes of $G$.

In the case that $G$ is a finite group, the number of real classes of $G$ is equal to the number of complex irreducible representations of $G$ which have real-valued characters.  Brauer \cite[Problem 19]{Br63} asked for a group-theoretical characterization for the number of complex irreducible representations of $G$ which can be realized over the field of real numbers.  While there is still no satisfactory solution to this problem, there are many examples which point to the notion of strong reality being an important concept in an eventual solution, see \cite{Go79} for example.  This apparent connection has spurred the classification of real and strongly real classes in various families of finite groups.

For example, Vdovin and Gal't \cite{VdGa10} finished the final cases in the proof that if $G$ is a finite simple group such that all of the elements of $G$ are real, then all of the elements of $G$ are strongly real.  Tiep and Zalesski \cite{TiZa05} have classified all finite simple and quasi-simple groups with the property that all of their classes are real.  Gill and Singh \cite{GiSi11, GiSi112} have classified all of the real and strongly real classes of the finite special and projective linear groups, as well as all quasi-simple covers of the finite projective special linear groups.  Gates, Singh, and the second-named author of this paper \cite{GSV14} classified the strongly real classes of the finite unitary group.

In this paper, we classify all of the real and strongly real classes of the finite special unitary groups.  Apart from continuing the program already in place for such classifications, we are also motivated by a result of Gow \cite{Go84} which gives a natural bijection between the real classes of the finite general linear groups and the finite unitary groups.  Since the classification of real classes of the finite special linear groups has been completed, it is a natural question to ask what happens with the bijection described by Gow when considering the real classes of the finite special linear and special unitary groups.  As we see, in most cases the description of real classes of the finite special unitary groups $SU_n(q)$ is essentially the same as the finite special linear groups $SL_n(q)$, but in the case that $q \equiv 3($mod $4)$ and $4|n$, the description is quite different.

This paper is organized as follows.  In Section \ref{Prelim}, notation and previous results on real and strongly real classes for the relevant groups are given.  In Section \ref{JdecompCent}, we discuss the Jordan decomposition of elements, and the description of centralizers of unipotent elements, which is key to our main arguments.  We prove some initial results in Section \ref{initialobs}, many of which are direct adaptations from \cite{GiSi11}.   In Section \ref{RealUnis}, we classify the real unipotent classes in the finite special unitary groups, and in Section \ref{MainThm} we prove our main result in Theorem \ref{MainReal} where we classify all real elements in these groups.  Finally, in Section \ref{strongreal}, we classify the strongly real elements in the finite special unitary groups in Theorem \ref{SUstrong}, and relate this classification to strong reality in the special orthogonal groups in Corollary \ref{SUSOstrong}.\\
\\
\noindent{\bf Acknowledgements.  }  The authors thank Bob Guralnick, Nick Gill, and Anupam Singh for helpful conversation and communication.  Schaeffer Fry and Vinroot were each supported in part by a grant from the Simons Foundation (Awards \#351233 and \#280496, respectively).

\section{Preliminaries and notation} \label{Prelim}

For any positive integer $n$, we let $(n)_2$ denote the $2$-part of $n$, that is, the largest power of $2$ which divides $n$, and we let $(n)_{2'} = n/(n)_2$ denote the odd part of $n$, or the largest odd divisor of $n$.

\subsection{The groups} \label{groups}

Let $\F_q$ be a finite field with $q$ elements of characteristic $p$, and let $\bar{\F}_q$ be a fixed algebraic closure.  Let $GL_n$ denote the group of $n$-by-$n$ invertible matrices over $\bar{\F}_q$, and let $F$ denote the standard Frobenius morphism of $GL_n$ defined by $F((a_{ij})) = (a_{ij}^q)$.  We denote the group of $F$-fixed points of $GL_n$, the finite general linear group, by
$$GL_n(q) = GL_n^F.$$
The finite special linear group $SL_n(q)$ is then defined as the set of determinant $1$ elements of $GL_n(q)$, 
$$SL_n(q) = \{ g \in GL_n(q) \, \mid \, \mathrm{det}(g) = 1 \}.$$

Now let $\eta$ be some invertible $n$-by-$n$ Hermitian matrix over $\F_{q^2}$, so that if $\eta = (\eta_{ij})$ then ${^\top \eta} = (\eta_{ji}) = (\eta_{ij}^q)$.  Define another Frobenius morphism $F_{\eta}$ of $GL_n$ (dependent on $\eta$) by
$$F_{\eta}(g) = \eta^{-1} ({^\top F}(g)^{-1}) \eta,$$
which is the standard Frobenius twisted by a graph automorphism of $GL_n$.  Define the unitary group $GU_n(q)$ to be
$$GU_n(q) = GL_n^{F_{\eta}},$$
and the special unitary group $SU_n(q)$ to be the set of determinant $1$ elements of $GU_n(q)$,
$$ SU_n(q) = \{ g \in GU_n(q) \, \mid \, \mathrm{det}(g) = 1 \}.$$

It will often be convenient to vary the choice of the Hermitian matrix $\eta$.  While this choice changes the matrices which are elements of $GU_n(q)$ or $SU_n(q)$, any two groups obtained by varying $\eta$ are isomorphic, and are in fact conjugate subgroups of $GL_n$.  It will also be useful to consider the fact that we have natural embeddings $GU_n(q) \subset GL_n(q^2)$ and $SU_n(q) \subset SL_n(q^2)$. 

We identify $GL_1$ with $\bar{\F}_q^{\times}$, so $GL_1(q)$ is identified with $\F_q^{\times}$, and $GU_1(q)$ is identified with a cyclic subgroup of order $q+1$ inside of $\F_{q^2}^{\times}$.   We let $C_{q+1}$ denote this cyclic subgroup of order $q+1$.

For odd $q$ we let $O^{\pm}_{n}(q)$ denote the split ($+$) or non-split ($-$) orthogonal group over the field $\F_q$.  That is, $O^{\pm}_n(q)$ is the subgroup of elements of $GL_n(q)$ which stabilize a non-degenerate (split or non-split) symmetric form on $\F_q^n$, and when $n$ is odd we have $O_n(q) = O^+_n(q) = O^-_n(q)$.  We let $SO^{\pm}_n(q)$ denote the special orthogonal group, or the subgroup of determinant $1$ elements of $O^{\pm}_n(q)$.

\subsection{Conjugacy classes} \label{classes}
We let $\cP$ denote the set of all integer partitions.  If $\mu \in \cP$, then we either write the parts of $\mu$ as $\mu = (\mu_1, \mu_2, \ldots)$ with $\mu_i \geq \mu_{i+1}$, or $\mu = (1^{m_1} 2^{m_2} \cdots )$, where $m_k$ is the multiplicity of the part $k$ in $\mu$, and we also write $m_k = m_k(\mu)$.  We write $|\mu|$ as the size of $\mu$, so $|\mu| = \sum_i \mu_i = \sum_k k m_k(\mu)$.  The only partition of size zero is written as $\varnothing$, the empty partition.

The conjugacy classes of $GL_n(q)$ are determined by elementary divisors.  In particular, if $\I$ is the set of non-constant monic irreducible polynomials in $\F_q[t]$ with non-zero constant, then conjugacy classes of $GL_n(q)$ are in bijection with functions $\bmu: \I \longrightarrow \cP$ which satisfy $\sum_{f \in \I} \mathrm{deg}(f) |\bmu(f)| = n$, where $|\bmu(f)|$ is the size of the partition $\bmu(f)$.  If the partition $\bmu(f)$ has parts $\bmu(f) = (\bmu(f)_1, \bmu(f)_2, \ldots)$, then the conjugacy class of $GL_n(q)$ corresponding to $\bmu$ has elementary divisors $f^{\bmu(f)_1}, f^{\bmu(f)_2}, \ldots$.
The unipotent classes of $GL_n(q)$ are thus given by functions $\bmu$ such that $\bmu(f) = \varnothing$ unless $f(t) = t-1$.  In this way, we may parameterize unipotent classes of $GL_n(q)$ by partitions $\mu$ of $n$.  The semisimple classes of $GL_n(q)$ then correspond to those functions $\bmu$ such that for each $f \in \I$, $\bmu(f)$ has parts that are at most size $1$, so $\bmu(f) = (1^{|\bmu(f)|})$.  This is equivalent to an element in the conjugacy class over $\bar{\F}_q$ having all Jordan blocks of size $1$.

Note that for any irreducible polynomial $f \in \I$, there is an element $\alpha \in \bar{\F}_q^{\times}$ such that
$$ f(t) = (t - \alpha) (t-\alpha^q) \cdots (t-\alpha^{q^{d-1}}),$$
where $d = \mathrm{deg}(f)$.  In this way, the elements of $\I$ are in bijection with the $F$-orbits of the elements of $\bar{\F}_q^{\times}$.  We may also consider the $F_{\eta}$-orbits of $\bar{\F}_q^{\times}$, where $F_{\eta}(\alpha) = \alpha^{-q}$.  For each such orbit, we associate a polynomial $f$,
$$f(t) = (t - \alpha) (t-\alpha^{-q})\cdots (t-\alpha^{(-q)^{d-1}}),$$
where $d$ is the size of the $F_{\eta}$-orbit of $\alpha$, and is also the degree of $f$.  We denote this set of polynomials by $\U$.  This set of polynomials takes the place of $\I$ in description of conjugacy classes of $GU_n(q)$, as determined by Ennola \cite{En62} and Wall \cite{Wa62}.  We in fact have $\U \subset \F_{q^2}[t]$, and another way of describing $\U$ is the set of monic non-constant polynomials with nonzero constant in $\F_{q^2}[t]$ which satisfy $f = f^{\checkmark}$, where if $f$ has roots $\alpha_1, \ldots, \alpha_d$ in $\bar{\F}_q^{\times}$, then $f^{\checkmark}$ is the polynomial with roots $\alpha_1^{-q}, \ldots, \alpha_d^{-q}$, and such that $f$ has no proper factors in $\F_{q^2}[t]$ with the same property.  Then $f \in \U$ is also irreducible in $\F_{q^2}[t]$ if and only if $d = \mathrm{deg}(f)$ is odd, and every root $\alpha$ of $f$ satisfies $\alpha^{q^d + 1} = 1$.  Otherwise, $f = g g^{\checkmark}$ where $g$ is irreducible in $\F_{q^2}[t]$.

The conjugacy classes of $GU_n(q)$ may be parameterized by functions $\bmu: \U \longrightarrow \cP$ which satisfy $\sum_{f \in \U} \mathrm{deg}(f) |\bmu(f)| = n$.  In this case, we take $f^{\bmu(f)_1}, f^{\bmu(f)_2}, \ldots$ to be the elementary divisors of elements of $GU_n(q)$ in the class corresponding to $\bmu$.  Like in $GL_n(q)$, the unipotent classes of $GU_n(q)$ may be parameterized by partitions $\mu$ of $n$, and the semisimple classes of $GU_n(q)$ correspond to those $\bmu$ such that $\bmu(f) = (1^{|\bmu(f)|})$ for each $f \in \U$.

If $\bmu$ parameterizes a conjugacy class of $GL_n(q)$ or $GU_n(q)$ as above, we let $\cC_{\bmu}$ denote this class.  Given the class $\cC_{\bmu}$, for each positive integer $i$, define the polynomial $w_i(t)$ by
$$ w_i(t) = \prod_{f} f(t)^{m_i(\bmu(f))},$$
where the product is over either $\I$ or $\U$, in the cases $GL_n(q)$ or $GU_n(q)$, respectively.  That is, $w_i(t)$ is the product, over all $f$, with a factor of $f$ for each time the integer $i$ is the part of the partition $\bmu(f)$.  Then let $n_i = \mathrm{deg}(w_i(t)) = \sum_f \mathrm{deg}(f) m_i(\bmu(f))$.  We define the \emph{type} of the class $\cC_{\bmu}$, following Macdonald \cite{Ma81}, to be the partition $\nu$ of $n$ such that $n_i = m_i(\nu)$, so $\nu = (1^{n_1} 2^{n_2} \cdots )$.

The conjugacy classes of $SL_n(q)$ and $SU_n(q)$ were described (and enumerated) by Macdonald \cite{Ma81} (see also \cite{BrWi11}).  Given a class $\cC_{\bmu}$ of $GL_n(q)$ or $GU_n(q)$, then it follows from the definition that $\cC_{\bmu}$ is contained in $SL_n(q)$ or $SU_n(q)$ exactly when the polynomial $\prod_{f, i} f^{\bmu(f)_i}$ has constant term $(-1)^n$, so that the corresponding elements have determinant $1$.  The question is then how such a class $\cC_{\bmu}$ splits in $GL_n(q)$ or $GU_n(q)$.  Macdonald \cite[Section 3]{Ma81} showed that if $\cC_{\bmu}$ is a $GL_n(q)$-class contained in $SL_n(q)$ which is of type $\nu = (\nu_1, \nu_2, \ldots, \nu_s)$, then $\cC_{\bmu}$ splits into exactly $\gcd(q-1, \nu_1, \ldots, \nu_s)$ classes in $SL_n(q)$.

The result for $SU_n(q)$ is similar, and is implied in \cite[Section 6]{Ma81}.  If $\cC_{\bmu}$ is a $GU_n(q)$-class of type $\nu = (\nu_1, \nu_2, \ldots, \nu_s)$ which is contained in $SU_n(q)$, then $\cC_{\bmu}$ splits into exactly $\gcd(q+1, \nu_1, \ldots, \nu_s)$ classes in $SU_n(q)$.  This follows by applying \cite[(3.2)]{Ma81} in a proof analogous to \cite[(3.3)]{Ma81}, where in \cite[(3,3), Eq. (5)]{Ma81}, in the case $SU_n(q)$, we have $\pi(x,y) = \pi(x, y^{-q})$ with $y^{1-q} \in C_{q+1}$, so that $\F_q^{\times}$ is replaced by the subgroup $C_{q+1}$ of $\F_{q^2}^{\times}$.  The resulting number is also the index of $C_{GU_n(q)}(g) \cdot SU_n(q)$ in $GU_n(q)$ for $g \in \cC_{\bmu}$.

\subsection{Real and strongly real classes} \label{realclasses}

In this section we discuss some of the known results in the classification of real and strongly real classes in finite linear and unitary groups.  

For any monic polynomial $f \in \bar{\F}_q[t]$ with nonzero constant, if $f$ has roots $\alpha_1, \ldots, \alpha_d \in \bar{\F}_q$ with multiplicities, so $f(t) = \prod_{i=1}^d (t - \alpha_i)$, then we define the monic polynomial with nonzero constant $\tilde{f} \in \bar{\F}_q[t]$ by $\tilde{f}(t) = \prod_{i=1}^d (t - \alpha_i^{-1})$.  If $f = \tilde{f}$, we say that $f$ is \emph{self-reciprocal}.

If $f \in \I$ or $\U$, it follows that $\tilde{f} \in \I$ or $\U$, respectively.  If $\cC_{\bmu}$ is a conjugacy class in $GL_n(q)$ or $GU_n(q)$, then $\cC_{\bmu}$ is a real class in that group if and only if $\bmu(f) = \bmu(\tilde{f})$ for every $f \in \I$ or $\U$, respectively.  This follows from considering the elementary divisor theory, and is discussed in \cite{Go84}, for example.  An element $f \in \U$ which is self-reciprocal, other than $t \pm 1$, must be of even degree. Such elements turn out to be elements of $\I$ as well.  If $f \in \U$ is not self-reciprocal, then there is a unique pair $f_1, \tilde{f_1} \in \I$ such that $f \tilde{f} = f_1 \tilde{f_1}$, which follows by considering the corresponding sets of roots as Frobenius orbits.  These facts give a natural bijection between the real classes of $GL_n(q)$ and the real classes of $GU_n(q)$, first described by Gow \cite{Go84}.

It is known that every real class of $GL_n(q)$ is also strongly real, which was first shown for odd $q$ by Wonenburger \cite{Wo66} and in general by by Djokovi\'{c} \cite{Dj67} (these statements are also proved independently in \cite{GiSi11}).  This statement is far from true in the group $GU_n(q)$, as given by the following main result of \cite{GSV14}.

\begin{theorem} [Gates, Singh, and Vinroot] \label{gustrong}  Suppose that $q$ is odd and $g \in GU_n(q)$ is a real element of $GU_n(q)$ in the class $\cC_{\bmu}$.  Then $g$ is strongly real in $GU_n(q)$ if and only if every elementary divisor of the form $(t \pm 1)^{2k}$ of $g$ has even multiplicity.  That is, the real class $\cC_{\bmu}$ of $GU_n(q)$ is strongly real if and only if when $f(t) = t \pm 1$, then $m_{2k}(\bmu(f))$ is always even.  Equivalently, $g \in GU_n(q)$ is strongly real in $GU_n(q)$ if and only if $g$ is an element of some embedded orthogonal group $O^{\pm}_n(q)$.
\end{theorem}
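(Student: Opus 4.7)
The plan is to reduce the question to the unipotent case via primary decomposition of $V = \bar{\F}_q^n$ under $g$, and then to characterize strongly real unipotent elements by their Jordan structure. Write $V = \bigoplus_{f \in \U} V_f$ for the decomposition into $g$-primary components. Any reversing element $h$ satisfies $h V_f = V_{\tilde f}$, so an inverting involution splits across orbits $\{f, \tilde f\}$ with $f \neq \tilde f$ and across individual self-reciprocal $f$. For $f \neq \tilde f$, the Hermitian form pairs $V_f$ non-degenerately with $V_{\tilde f}$, and a ``swap'' involution on $V_f \oplus V_{\tilde f}$ lying in $GU_n(q)$ and inverting $g$ can be written down explicitly from any chosen basis. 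For self-reciprocal $f$ with $f \neq t \pm 1$, a direct computation from the Jordan form of $g|_{V_f}$ (using that the corresponding semisimple part has no $\pm 1$ eigenvalue, which leaves room to adjust signs) produces an inverting involution in the appropriate embedded unitary group. Thus strong reality of $g$ reduces to strong reality of the unipotent blocks associated to $f = t \pm 1$.

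For the unipotent case, let $u \in GU_m(q)$ be unipotent with Jordan partition $\mu$. The central claim is that the following are equivalent: (i) $u$ is strongly real in $GU_m(q)$; (ii) every even part of $\mu$ occurs with even multiplicity; and (iii) $u$ lies in an embedded orthogonal group $O^\pm_m(q) \subset GU_m(q)$. The implication (ii) $\Rightarrow$ (iii) is constructive: build a $u$-invariant non-degenerate symmetric bilinear form on $\F_q^m$ block by block, using that a single odd-size Jordan block carries such a form individually, while a single even-size block carries only a symplectic form and hence must be paired with another block of the same size to assemble a symmetric one. The implication (iii) $\Rightarrow$ (i) is classical: every element of $O^\pm_m(q)$ is a product of two involutions by the theorems of Wonenburger \cite{Wo66} and Djokovi\'c \cite{Dj67}, and the resulting reversing involution lives in $O^\pm_m(q) \subset GU_m(q)$.

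The main obstacle is the remaining direction (i) $\Rightarrow$ (ii), which is equivalently (i) $\Rightarrow$ (iii). Suppose $u = s_1 s_2$ with involutions $s_1, s_2 \in GU_m(q)$, so that $s_1$ inverts $u$. The $\pm 1$-eigenspace decomposition $V = V_+ \oplus V_-$ of $s_1$ is orthogonal under the Hermitian form, and each summand is non-degenerate. Combining the intertwiner $s_1 : u \to u^{-1}$ with the Hermitian form and descending to an $\F_q$-structure through an appropriate twist yields a $u$-invariant non-degenerate \emph{symmetric} bilinear form, placing $u$ inside an embedded $O^\pm_m(q)$; the known classification of orthogonal unipotent classes (see Wall \cite{Wa62}) then forces the evenness condition on $\mu$. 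Producing this symmetric form from the involution $s_1$ and verifying the $\F_q$-rationality are the technical heart of the argument. Finally, reassembling the primary-component constructions in Step~1 into a single global symmetric form on $\F_q^n$ gives the equivalence with $g \in O^\pm_n(q)$ for the full statement.
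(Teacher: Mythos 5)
First, note that the paper does not actually prove this statement: Theorem \ref{gustrong} is quoted as a preliminary result from \cite{GSV14}, so you are in effect attempting to re-prove the main theorem of that (substantial) paper rather than something established here. Your global architecture is the right one and matches the known route: the primary decomposition reduces the problem to the blocks at $f = t \pm 1$ (a reversing involution $h$ does carry $V_f$ to $V_{\tilde f}$, so it preserves the non-degenerate summands $V_{t\pm1}$ and restricts to a reversing involution there, making the reduction valid in both directions); the equivalence of your (ii) and (iii) is Wall's classification \cite{Wa62}; and (iii) $\Rightarrow$ (i) is Wonenburger \cite{Wo66}. Those parts are sound, if sketchy for the self-reciprocal $f \neq t\pm1$ blocks.

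The genuine gap is the direction (i) $\Rightarrow$ (ii)/(iii), which is the entire content of the theorem, and the mechanism you propose for it does not work as stated. If $s_1 \in GU_m(q)$ is an involution reversing $u$ and $H$ is the Hermitian form, the natural candidates for a ``combined'' form are not $u$-invariant: for $B(x,y) = H(x, s_1 y)$ one computes $B(ux,uy) = H(ux, s_1 u y) = H(ux, u^{-1} s_1 y) = H(u^2 x, s_1 y) = B(u^2x, y)$, which differs from $B(x,y)$ for $u$ unipotent and nontrivial, and the other obvious variants fail similarly. Moreover, no formal construction of this kind can succeed without using the Hermitian structure in an essential way: in $GL_{2k}(q)$ the single Jordan block $J_{2k}$ \emph{is} a product of two involutions (Wonenburger, Djokovi\'{c} \cite{Dj67}), yet it preserves no non-degenerate symmetric form; so ``reversed by an involution'' cannot by itself yield an invariant symmetric form, and the argument must detect exactly what changes when the involution is required to lie in $GU_m(q)$. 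This is where \cite{GSV14} does its real work: rather than building a form, it analyses the full set of reversing elements of $u$ in $GU_m(q)$ via Wall's description of centralizers and shows that none of them is an involution when some $(t\pm1)^{2k}$ has odd multiplicity. The flavor of that computation is visible in Lemma \ref{reverseu} of the present paper, which already settles the case of a single even Jordan block: every reverser of the regular unipotent element of $GU_{2k}(q)$ has diagonal entries $(\beta,-\beta,\ldots)$ with $\beta^{q+1} = -1$, and such an element can never square to the identity since that would force $\beta = \pm 1$. Until you either supply a correct $u$-invariant symmetric form or carry out an analysis of reversers along these lines, the ``only if'' direction remains unproved.
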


It is a natural question to ask which real or strongly real elements of $GL_n(q)$ or $GU_n(q)$ that are contained in $SL_n(q)$ remain real or strongly real in $SL_n(q)$ or $SU_n(q)$.  Gill and Singh answer the question of which elements of $SL_n(q)$ that are real in $GL_n(q)$ are also real in $SL_n(q)$ in \cite[Propositions 4.4 and 5.5]{GiSi11}, with the following classification. 

\begin{theorem} [Gill and Singh] \label{slnreal} Suppose that $g \in SL_n(q)$ such that $g$ is a real element of $GL_n(q)$, and $g$ is in the $GL_n(q)$-class $\cC_{\bmu}$.  
\begin{enumerate}
\item[(i)] If $n \not\equiv 2($mod $4)$ or $q \not\equiv 3($mod $4)$, then $g$ is also real in $SL_n(q)$, and $\cC_{\bmu}$ is a union of real $SL_n(q)$-classes.

\item[(ii)] If $n \equiv 2($mod $4)$ and $q \equiv 3($mod $4)$, then $g$ is real in $SL_n(q)$ if and only if $g$ has some elementary divisor of the form $f(t)^k$ for $k$ odd.  That is, $\cC_{\bmu}$ is a union of real $SL_n(q)$-classes if and only if there is some $f \in \I$ such that $\bmu(f)$ has some odd part.
\end{enumerate}
\end{theorem}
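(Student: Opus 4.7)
My plan is to reformulate reality in $SL_n(q)$ as a determinant-coset condition and then reduce it block-by-block through the elementary divisor decomposition. Fix any reversing element $h \in GL_n(q)$ for $g$; the set of all $GL_n(q)$-reversing elements for $g$ is the coset $h \cdot C_{GL_n(q)}(g)$, so $g$ is real in $SL_n(q)$ if and only if $\det(h) \in \det(C_{GL_n(q)}(g))$. By the splitting result of Macdonald recalled in Section \ref{classes}, $\det(C_{GL_n(q)}(g))$ is the unique subgroup $(\F_q^\times)^d$ of index $d := \gcd(q-1, \nu_1, \ldots, \nu_s)$ in the cyclic group $\F_q^\times$, where $\nu$ is the type of $\cC_\bmu$. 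Since $h^2 \in C_{GL_n(q)}(g)$, the coset $\det(h)(\F_q^\times)^d$ is a $2$-torsion element of $\F_q^\times/(\F_q^\times)^d \cong \Z/d\Z$.

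Observe that the parts of $\nu$ appearing are exactly the integers $i$ which occur as a part of some $\bmu(f)$, so $d$ is odd exactly when $q$ is even or some $\bmu(f)$ has an odd part. In this ``$d$ odd'' case, $\Z/d\Z$ has trivial $2$-torsion and so $\det(h) \in (\F_q^\times)^d$ automatically, proving (i) in characteristic $2$ and giving the ``some odd part implies real'' implication of (ii). It remains to analyze the case $q$ odd and every $\bmu(f)$ having only even parts, in which $d$ is even and the coset $\det(h)(\F_q^\times)^d$ is either trivial or equal to the unique nonzero $2$-torsion element of $\Z/d\Z$.

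To distinguish these possibilities I would construct an explicit reversing element block-by-block. For an unordered pair $\{f, \tilde f\}$ with $f \neq \tilde f$, the reversing element must swap the generalized $f$-eigenspace $V_f$ with $V_{\tilde f}$, and can be taken in the form $\bigl(\begin{smallmatrix} 0 & B \\ A & 0\end{smallmatrix}\bigr)$ where $A, B$ are invertible intertwiners satisfying $A\,g|_{V_f} A^{-1} = g^{-1}|_{V_{\tilde f}}$ and a dual equation; a judicious choice of bases gives $\det A = \det B = 1$, and $\dim V_f = \deg(f)|\bmu(f)|$ is even (since $|\bmu(f)|$ is even), so this pair contributes $(-1)^{\dim V_f}\det(A)\det(B) = 1$ to $\det(h)$. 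For self-reciprocal $f$ of even degree other than $t\pm 1$, the extension structure of $\F_q[t]/(f)$ with its involution $\alpha\mapsto \alpha^{-1}$ allows the corresponding reversing block to be chosen with determinant a square. The essential contribution comes from $f = t\pm 1$: for a single Jordan block $J_k$ at eigenvalue $\pm 1$, the cyclic vector construction $h e_j := (J_k^{-1} - I)^{k-j} e_k$ produces an upper-triangular reversing element whose determinant is $(-1)^{\lfloor k/2\rfloor}$. Taking direct sums over all (even) parts of $\bmu(t-1)$ and $\bmu(t+1)$, and using
\[ n - |\bmu(t-1)| - |\bmu(t+1)| = \sum_{f \neq t\pm 1} \deg(f)\,|\bmu(f)| \equiv 0 \pmod 4 \]
(since non-self-reciprocal $f$ pair with $\tilde f$, self-reciprocal $f \neq t\pm 1$ have even degree, and each $|\bmu(f)|$ is even), one obtains
\[ \det(h) \equiv (-1)^{n/2} \pmod{(\F_q^\times)^2}. \]

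To finish: $\det(h)\in (\F_q^\times)^d$ iff $(-1)^{n/2}\in (\F_q^\times)^d$. If $n \equiv 0 \pmod 4$ the sign is $+1$ and the condition holds; if $n \equiv 2 \pmod 4$, the condition becomes $-1 \in (\F_q^\times)^d$, which for an even divisor $d$ of $q-1$ is equivalent to $d \mid (q-1)/2$, i.e., to $q \equiv 1 \pmod 4$. This completes both (i) and (ii). The main obstacle I anticipate is the careful block-by-block execution: verifying that non-self-reciprocal pairs and self-reciprocal blocks other than $t\pm 1$ genuinely contribute only squares to $\det(h)$ modulo $(\F_q^\times)^d$ (the centralizers are smaller than $\F_q^\times$, so one must check there is enough room to adjust), and cleanly tracking the signs from Jordan blocks at $\pm 1$. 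These steps rest on the explicit centralizer descriptions developed in Section \ref{JdecompCent}.
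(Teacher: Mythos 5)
Your framework is sound, and for most of the statement it runs on the same mechanism the paper itself uses when it fills in the converse of (ii) in Section 3.3: the reversing elements of $g$ form the single coset $h\,C_{GL_n(q)}(g)$; the determinant image of the centralizer is the index-$d$ subgroup $(\F_q^\times)^d$, $d=\gcd(q-1,\nu_1,\ldots,\nu_s)$, by Macdonald's splitting count recalled in Section \ref{classes}; and the coset $\det(h)(\F_q^\times)^d$ is $2$-torsion. This correctly disposes of $q$ even, of the ``some $\bmu(f)$ has an odd part'' direction, and of the converse of (ii): there $\det(h)$ is a nonsquare while $(\F_q^\times)^d\subseteq(\F_q^\times)^2$, which is exactly the argument the paper makes via Lemma \ref{centdetsquare}.

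The gap is in the final inference. From $\det(h)\equiv(-1)^{n/2}\pmod{(\F_q^\times)^2}$ you cannot conclude that $\det(h)\in(\F_q^\times)^d$ if and only if $(-1)^{n/2}\in(\F_q^\times)^d$: that needs the congruence modulo $(\F_q^\times)^d$, a strictly smaller subgroup whenever $d>2$. The $2$-torsion observation saves you only when $d\equiv 2\pmod 4$, for then the nontrivial $2$-torsion coset of $(\F_q^\times)^d$ in $\F_q^\times$ consists of nonsquares and squareness of $\det(h)$ decides everything (this covers all of case (ii) and part of case (i)). But when $4\mid d$ --- which happens precisely when $q\equiv 1\pmod 4$ and every exponent $k$ of every elementary divisor $f(t)^k$ is divisible by $4$ --- both $2$-torsion cosets consist of squares, so knowing $\det(h)$ is a square decides nothing: e.g.\ for $q=13$, $d=4$ one has $(\F_{13}^\times)^4=\{1,3,9\}$ while $4=2^2$ is a square lying in the nontrivial $2$-torsion coset. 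The repair is to sharpen ``contributes a square'' to an exact value: with Wonenburger-type involutions $1_P\oplus(-1_Q)$ each block with $f\ne t\pm1$ has $\dim Q$ even once all parts of $\bmu$ are even, so the contribution is exactly $1$ and $\det(h)=(-1)^{n/2}$ on the nose, after which your conclusion is valid. A related loose end: the claim ``$-1\in(\F_q^\times)^d$ iff $d\mid(q-1)/2$ iff $q\equiv1\pmod 4$'' is false for a general even divisor $d$ (again $d=4$, $q=13$), but it is correct where you use it because $n\equiv 2\pmod 4$ with all parts even forces some part $\equiv 2\pmod 4$ and hence $d\equiv 2\pmod 4$; this should be said explicitly.
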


Gill and Singh also answer the question of which elements are strongly real in $SL_n(q)$ in \cite[Theorem 6.1]{GiSi11}, which we state below.

\begin{theorem} [Gill and Singh] \label{slnstrong} Suppose that $g \in SL_n(q)$ such that $g$ is a real element of $SL_n(q)$, and $g$ is in the $GL_n(q)$-class $\cC_{\bmu}$.
\begin{enumerate}
\item[(i)] If $n \not\equiv 2($mod $4)$ or $q$ is even, then $g$ is also strongly real in $SL_n(q)$, and $\cC_{\bmu}$ is a union of strongly real $SL_n(q)$-classes.
\item[(ii)] If $n \equiv 2($mod $4)$ and $q$ is odd, then $g$ is strongly real in $SL_n(q)$ if and only if $g$ has an elementary divisor of the form $(t \pm 1)^k$ with $k$ odd.  That is, $\cC_{\bmu}$ is a union of strongly real $SL_n(q)$-classes if and only if at least one of $\bmu(t \pm 1)$ has an odd part.
\end{enumerate}

\end{theorem}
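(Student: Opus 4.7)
The plan is to reduce strong reality in $SL_n(q)$ to a determinant computation on the coset of reversing involutions inside $GL_n(q)$. Since $g$ is real in $SL_n(q) \subset GL_n(q)$, the Wonenburger--Djokovi\'c theorem gives an involution $h_0 \in GL_n(q)$ with $h_0 g h_0^{-1} = g^{-1}$. The set of all reversing elements in $GL_n(q)$ is $h_0 \cdot C_{GL_n(q)}(g)$, and an element $h = h_0 c$ of this coset is an involution precisely when $h_0 c h_0^{-1} = c^{-1}$. Thus the problem reduces to computing the image under $\det$ of the set $\{c \in C_{GL_n(q)}(g) : h_0 c h_0^{-1} = c^{-1}\}$ and checking whether $\det(h_0)^{-1}$ lies in it.

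Next I would decompose $V = \F_q^n = \bigoplus_f V_f$ into primary components for $g$, grouping $V_f \oplus V_{\tilde f}$ when $f \ne \tilde f$. Any reversing involution respects this grouping, and its determinant factors across the blocks. On a non-self-reciprocal pair $V_f \oplus V_{\tilde f}$, a reversing involution swaps the summands and its determinant can be freely scaled over $\F_q^{\times}$ by a centralizer adjustment; the same freedom holds on self-reciprocal blocks with $f \ne t \pm 1$, via an analysis analogous to that underlying Theorem \ref{gustrong} but carried out in the linear setting. Consequently, the entire obstruction to achieving $\det(h) = 1$ is concentrated on the unipotent and anti-unipotent components, i.e.\ on the partitions $\bmu(t-1)$ and $\bmu(t+1)$.

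The core computation is then local to the $f = t \pm 1$ blocks. On a single Jordan block of size $k$ at eigenvalue $\pm 1$, one writes down an explicit reversing involution (essentially an antidiagonal matrix with a controlled sign pattern), computes its determinant, and then tracks what happens under centralizer adjustments that preserve the involution condition. When $q$ is even, $\det(h) \in \{\pm 1\} = \{1\}$ automatically, so (i) is immediate in that case. When $q$ is odd, the combinatorial upshot I expect is that an odd part in $\bmu(t \pm 1)$ supplies a Jordan block on which the sign of the reversing involution can be toggled independently, giving enough flexibility to force $\det(h) = 1$; in the absence of any odd part, the determinant contribution of the unipotent blocks is rigid, and a parity computation against $n \bmod 4$ decides whether it equals $1$ or $-1$. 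A small sanity check (for instance, $g = J_2$ in $SL_2(q)$ with $q$ odd, where a direct computation shows every reversing involution has determinant $-1$) calibrates the signs.

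The main obstacle I anticipate is the bookkeeping in that last step: correctly identifying, for a partition $\lambda$ with all even parts, the forced $\{\pm 1\}$-value of the determinant of a reversing involution on the unipotent block indexed by $\lambda$, after allowing all centralizer adjustments compatible with $h^2 = I$. The structure of unipotent centralizers that the paper develops in Section \ref{JdecompCent} is presumably tailored to make this calculation, together with the case split against $n \bmod 4$, tractable.
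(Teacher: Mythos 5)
First, note that the paper does not prove this statement at all: it is quoted verbatim from Gill and Singh \cite[Theorem 6.1]{GiSi11}, so there is no internal proof to compare against. The closest analogue inside the paper is Lemma \ref{orthogdet}, whose proof (via Wonenburger's cyclic decomposition) is the right model for what you are attempting.

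Your overall architecture (reduce to the determinant of reversing involutions, decompose $V$ into primary components, isolate the $t\pm 1$ blocks) is the standard one, and your treatment of the existence direction is sound: an odd part of $\bmu(t\pm1)$ gives a cyclic block on which $h_j$ and $-h_j$ are both reversing involutions of opposite determinant, exactly as in Lemma \ref{orthogdet}(1). But there is a genuine error in the middle step. You claim that on a non-self-reciprocal pair $V_f\oplus V_{\tilde f}$, and likewise on self-reciprocal blocks with $f\neq t\pm1$, the determinant of a reversing involution ``can be freely scaled over $\F_q^{\times}$,'' and you conclude that the entire obstruction is concentrated on the $t\pm1$ components. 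An involution has determinant $\pm1$, so there is no scaling over $\F_q^\times$ to speak of; worse, on these blocks the sign is \emph{rigid}, not free. Indeed, a reversing involution must swap $V_f$ and $V_{\tilde f}$, so in block form it is $\left(\begin{smallmatrix} 0 & a\\ a^{-1} & 0\end{smallmatrix}\right)$ with determinant forced to be $(-1)^{\dim V_f}$; a similar norm computation in $\F_q[t]/(f^k)$ forces the sign $(-1)^{(\dim V_i)/2}$ when $f=\tilde f$ has even degree. These rigid contributions are precisely what assemble into the $(-1)^{n/2}$ of Lemma \ref{orthogdet}(2), and discarding them breaks the converse direction of (ii): for example, $g\in SL_2(q)$ with irreducible self-reciprocal characteristic polynomial has empty $t\pm1$ part, so your scheme would report no obstruction and conclude strong reality, whereas the only involutions in $SL_2(q)$ for $q$ odd are $\pm I$ and $g$ is not strongly real there. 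The correct bookkeeping must track a forced sign $(-1)^{(\dim)/2}$ on every block without an odd $t\pm1$ part, together with a rigidity argument for the even-part $t\pm1$ blocks (for which Lemma \ref{centdetsquare} alone does not suffice when $q\equiv 1\ (\mathrm{mod}\ 4)$, since $-1$ is then a square), before the parity comparison with $n\ \mathrm{mod}\ 4$ can be made.
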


The main results of this paper are the statements on real and strongly real classes for $SU_n(q)$ analogous to Theorems \ref{slnreal} and \ref{slnstrong}, which are given in Theorems \ref{MainReal} and \ref{SUstrong}.

\section{Jordan decomposition and centralizers of unipotents} \label{JdecompCent}

\subsection{Jordan decomposition}

By the Jordan decomposition of elements, for any linear algebraic group $\bf{G}$ over $\bar{\F}_q$, any $g \in \bf{G}$ can be written uniquely as $g = su$ for commuting elements $s, u \in \bf{G}$, where $s$ is semisimple and $u$ is unipotent.  If $G$ is the group of $\F_q$-points of $\bf{G}$ under some Frobenius map and $g \in G$, then in the decomposition $g = su$, we also have $s, u \in G$, where $s$ is the $p'$-part of $g$ and $u$ is the $p$-part of $g$.

Now suppose that $G = GL_n(q)$ or $GU_n(q)$, and $g \in G$ is in the $G$-conjugacy class $\cC_{\bmu}$.  If we consider the Jordan decomposition $g = su$, then we can say exactly in which $G$-classes $s$ and $u$ lie in the next result, which follows directly from considering the uniqueness of the decomposition of $g$ in $GL_n$.

\begin{proposition} \label{Jdecomp}  Let $g \in G$ with $G = GL_n(q)$ or $GU_n(q)$, respectively, and suppose $g \in \cC_{\bmu}$.  Let $g = su$ be the Jordan decomposition of $g$.  Define $\bmu_s$ by $\bmu_s(f) = (1^{|\bmu(f)|})$ for $f \in \I$ or $\U$, respectively, and define $\mu_u$ to be the partition such that the multiplicity of each positive integer $i$ in $\mu_u$ is given by $m_i(\mu_u) = \sum_{f} \mathrm{deg}(f) m_i(\bmu(f))$.  Then $s$ is in the semisimple class $\cC_{\bmu_s}$, and $u$ is in the unipotent class corresponding to the partition $\mu_u$.
\end{proposition}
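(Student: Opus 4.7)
The plan is to exploit the uniqueness of the Jordan decomposition in order to reduce the computation to a single elementary-divisor block. As an element of $GL_n(\bar{\F}_q)$, any $g \in \cC_{\bmu}$ is conjugate to a block-diagonal matrix whose blocks correspond bijectively to the positive parts $\bmu(f)_j$ (with $f$ ranging over $\I$ or $\U$), each block being a cyclic module with elementary divisor $f^{\bmu(f)_j}$. Because the Jordan decomposition commutes with conjugation and with direct sums --- the semisimple (resp.\ unipotent) part of a block-diagonal matrix is the block-diagonal matrix of the semisimple (resp.\ unipotent) parts --- it suffices to describe $s$ and $u$ on a single such block and then reassemble.

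For a single block of dimension $dk$ with elementary divisor $f^k$, where $\deg(f) = d$ and $f$ has roots $\alpha_1, \ldots, \alpha_d \in \bar{\F}_q^\times$, the block is conjugate over $\bar{\F}_q$ to the direct sum of $d$ ordinary Jordan blocks of size $k$, one at each eigenvalue $\alpha_i$. Its semisimple part is then conjugate to $\diag(\alpha_1 I_k, \ldots, \alpha_d I_k)$, whose multiset of eigenvalues is $k$ copies of the Frobenius (resp.\ $F_\eta$-)orbit $\{\alpha_1, \ldots, \alpha_d\}$; translating back to elementary divisors over the fixed field, this block contributes precisely $k$ elementary divisors equal to $f$ to the semisimple part $s$. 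The unipotent part of the block has Jordan type over $\bar{\F}_q$ consisting of $d$ Jordan blocks of size $k$ at the eigenvalue $1$.

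Summing these contributions over all pairs $(f,j)$ with $\bmu(f)_j > 0$: the semisimple part $s$ has, for each $f$, exactly $\sum_j \bmu(f)_j = |\bmu(f)|$ elementary divisors equal to $f$, which matches $\bmu_s(f) = (1^{|\bmu(f)|})$. The unipotent part $u$ has Jordan partition $\mu_u$ of $n$ in which the multiplicity of $i$ is
$$ m_i(\mu_u) \; = \sum_{f,\, j \,:\, \bmu(f)_j = i} \deg(f) \; = \; \sum_{f} \deg(f)\, m_i(\bmu(f)), $$
as claimed.

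The main subtlety is to ensure that the $\bar{\F}_q$-analysis genuinely recovers the $G$-class data over the fixed field. This follows because $s$ and $u$ are both fixed by the relevant Frobenius morphism (by the uniqueness of the Jordan decomposition applied to the $F$- or $F_\eta$-fixed element $g$), so $s, u \in G$; the Ennola--Wall parametrization of semisimple and unipotent classes of $GL_n(q)$ and $GU_n(q)$ (recalled in Section \ref{classes}) then identifies the $G$-classes of $s$ and $u$ from the elementary-divisor and Jordan-partition data computed above.
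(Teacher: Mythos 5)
Your proof is correct and follows exactly the route the paper intends: the paper offers no written proof beyond the remark that the result ``follows directly from considering the uniqueness of the decomposition of $g$ in $GL_n$,'' and your argument is a careful elaboration of precisely that idea, reducing to a single cyclic block with elementary divisor $f^k$, computing its semisimple and unipotent parts over $\bar{\F}_q$, and reassembling via the Ennola--Wall parametrization. Your block-by-block computation also reproduces the paper's worked example in $GL_{15}(q)$, confirming the formulas for $\bmu_s$ and $\mu_u$.
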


For example, suppose $g \in GL_{15}(q)$ and $g \in \cC_{\bmu}$ where $\bmu(f_1) = (2,1)$, $\bmu(f_2) = (2)$, and $\bmu(f_3) = (3,1)$ with $\mathrm{deg}(f_1) = 1$ and $\mathrm{deg}(f_2) = \mathrm{deg}(f_3) = 2$, and $\bmu(f) = \varnothing$ for all other $f \in \I$.  Then $g = su$, where $s \in \cC_{\bmu_s}$ with $\bmu_s(f_1) = (1^3)$, $\bmu_s(f_2) = (1^2)$, and $\bmu_s(f_3) = (1^4)$, and $u$ is in the unipotent class corresponding to the partition $\mu_u = (3^2 2^3 1^3)$.

\subsection{Centralizers of unipotent elements}

We first consider unipotent elements in $GL_n$, and we let $J_k$ denote the unipotent Jordan block of size $k$:
$$ J_k = \left( \begin{array}{ccccc} 1 & 1 &  &  &  \\   & 1 & 1 &  &  \\  &  &  \ddots & \ddots & \\  &  &  &  1 & 1 \\  &  &  &  & 1 \end{array} \right).$$
If a unipotent element has $m_k$ Jordan blocks of type $k$, then we denote this unipotent element by $\bigoplus_k J_k^{m_k}$, where the sum is over only those $k$ such that $m_k \neq 0$.  If $\mu = (1^{m_1} 2^{m_2} \cdots )$ is a partition of $n$, then $\bigoplus_k J_k^{m_k}$ is a representative element of the unipotent class of $GL_n$ corresponding to the partition $\mu$.  Taking $u = \bigoplus_k J_k^{m_k}$, consider the centralizer $C_{GL_n}(u)$ of $u$ in $GL_n$.  By \cite[Theorem 3.1(iv)]{LiSe12} (which the authors point out can be concluded from results in \cite{SpSt70, Wa62}), we have 
$$ C_{GL_n}(u) = UR,$$
where $U = \mathrm{R}_{\mathrm{u}}(C_{GL_n}(u))$ is the unipotent radical of the centralizer, and the other factor is reductive given by
$$ R \cong \prod_k GL_{m_k}.$$
We may further see precisely how the $GL_{m_k}$ factor embeds in the centralizer.  In particular, given any element $a = (a_{ij}) \in GL_{m_k}$, we see that the element $(a_{ij} I_k)$, consisting of $k$-by-$k$ scalar blocks replacing the entries of $a$, is in the centralizer of the element $J_k^{m_k}$.  We note that this also holds when we replace each Jordan block $J_k$ with some $GL_{k}$-conjugate.  One observation that we need is the fact that
\begin{equation} \label{det}
\mathrm{det}( (a_{ij} I_k) ) = (\mathrm{det}(a))^k.
\end{equation}

Now consider the finite group $GL_n(q)$, where we still consider the element $u = \bigoplus_k J_k^{m_k} \in GL_n(q)$ as a representative of the unipotent class corresponding to $\mu = (1^{m_1} 2^{m_2} \cdots )$.  By \cite[Theorem 7.1(ii)]{LiSe12}, the centralizer of $u$ in $GL_n(q)$ is given by $C_{GL_n(q)}(u) = U_F R_F$, where $U_F$ is a unipotent factor and $R_F$ is given by
$$ R_F \cong \prod_k GL_{m_k}(q),$$
given exactly by the $F$-fixed points of $R$, so that the $GL_{m_k}(q)$ factors are embedded the same as described above.  

In the case of the unitary group, the unipotent element $u = \bigoplus_k J_k^{m_k}$ of $GL_n$ is no longer necessarily in $GU_n(q)$.  However, we can choose our Frobenius map $F_{\eta}$ in such a way that there is a $GL_k$-conjugate of $J_k$,  $\tilde{J}_k \in GU_k(q)$, such that the unipotent element $\tilde{u} = \bigoplus_k \tilde{J}_k^{m_k}$ is a representative in $GU_n(q)$ of the unipotent class of type $\mu$.  As explained in \cite[p. 114]{LiSe12}, the Frobenius morphism $F_{\eta}$ can be chosen simultaneously to stabilize each $GL_{m_k}$ factor in $R$ of the centralizer $C_{GL_n}(\tilde{u})$.  The centralizer $C_{GU_n(q)}(\tilde{u})$ is then given by \cite[Theorem 7.1(ii)]{LiSe12} as $C_{GU_n(q)}(\tilde{u}) = U_{F_{\eta}} R_{F_{\eta}}$ with $U_{F_{\eta}}$ again a unipotent factor, and 
$$R_{F_{\eta}} \cong \prod_k GU_{m_k}(q),$$
given by the $F_{\eta}$-fixed points of the $R$ factor in $C_{GL_n}(\tilde{u})$, again embedded as before so that in particular (\ref{det}) holds.

The following result will be important in the main arguments of this paper.

\begin{lemma} \label{centdetsquare}
Let $u$ be a unipotent element in either $G = GL_n(q)$ or $G = GU_n(q)$ in the class corresponding to the partition $\mu = (1^{m_1} 2^{m_2} \cdots )$.  For any $\delta_k \in \F_q^{\times}$ or $C_{q+1}$, respectively, for all $k$ such that $m_k \neq 0$, there exists some $g \in C_G(u)$ such that $\mathrm{det}(g) = \prod_k \delta_k^k$.

Suppose that $m_k = 0$ when $k$ is odd, that is, $u$ has no elementary divisors of the form $(t-1)^k$ with $k$ odd.  Then for any element $g \in C_G(u)$, we have $\mathrm{det}(g)$ is a square for any $g \in C_G(u)$.
\end{lemma}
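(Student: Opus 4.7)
The plan is to reduce the problem to the reductive Levi factor of the centralizer and then exploit the explicit ``scalar block'' embedding of $\prod_k GL_{m_k}$ or $\prod_k GU_{m_k}$ together with the determinant formula (\ref{det}). First, by the results of Liebeck--Seitz recalled immediately above, we may write $C_G(u) = V \cdot R$, where $V$ is the unipotent radical of the centralizer and $R \cong \prod_k GL_{m_k}(q)$ or $\prod_k GU_{m_k}(q)$ according as $G = GL_n(q)$ or $G = GU_n(q)$. Any element of $V$ is unipotent and hence has determinant $1$, so every $g \in C_G(u)$ satisfies $\mathrm{det}(g) = \mathrm{det}(h)$ for the $R$-component $h$ of $g$, and the problem reduces to understanding determinants of elements of $R$.

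For the first assertion, write $h = \prod_k h_k$ according to the factorization of $R$. Under the embedding recalled before (\ref{det}), an element $a_k \in GL_{m_k}(q)$ (resp.\ $GU_{m_k}(q)$) is sent to the block-scalar matrix $(a_{ij} I_k)$ acting in the piece of $u$ that consists of the $m_k$ Jordan blocks of size $k$, and by (\ref{det}) this image has determinant $\mathrm{det}(a_k)^k$. Since the determinant maps $GL_{m_k}(q) \to \F_q^\times$ and $GU_{m_k}(q) \to C_{q+1}$ are surjective, I can choose $a_k$ with $\mathrm{det}(a_k) = \delta_k$ for each $k$ with $m_k \neq 0$. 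Taking $g$ to be the product of these block-scalar embeddings (and trivial on $V$) then yields $\mathrm{det}(g) = \prod_k \delta_k^k$, as required.

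For the second assertion, suppose $m_k = 0$ whenever $k$ is odd. Then for any $g \in C_G(u)$, by the reduction above $\mathrm{det}(g) = \prod_k \mathrm{det}(a_k)^k$, where the product runs over $k$ with $m_k \neq 0$, hence only over even $k$. For each such $k$ we have $\mathrm{det}(a_k)^k = \bigl(\mathrm{det}(a_k)^{k/2}\bigr)^2$, which is a square in $\F_q^\times$ (resp.\ in $C_{q+1}$). A product of squares is a square, so $\mathrm{det}(g)$ is a square.

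The argument is essentially a bookkeeping exercise once the Liebeck--Seitz structure of $C_G(u)$ and the determinant identity (\ref{det}) are in hand; the only point requiring mild care is the surjectivity of the determinant on the $GU_{m_k}(q)$ factors onto $C_{q+1}$, which is standard. No serious obstacle is anticipated.
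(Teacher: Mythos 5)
Your proposal is correct and follows essentially the same route as the paper's own proof: decompose $C_G(u)$ as unipotent radical times the reductive factor $\prod_k GL_{m_k}(q)$ or $\prod_k GU_{m_k}(q)$, apply the determinant identity (\ref{det}) for the block-scalar embedding, and use surjectivity of the determinant on each factor. The only difference is that you spell out the square-of-a-square step in the second assertion slightly more explicitly, which the paper leaves implicit.
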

\begin{proof} From the discussion above, any $g \in C_G(u)$ is of the form $g = vr$ where $v$ is unipotent, and $r$ is in the factor isomorphic to either $\prod_k GL_{m_k}(q)$ or $\prod_k GU_{m_k}(q)$, respectively.   Since $v$ is unipotent, we have $\mathrm{det}(g) = \mathrm{det}(r)$.  Now write $r = \prod_k a_k$ with $a_k$ in the embedded factor of $GL_{m_k}(q)$ or $GU_{m_k}(q)$.  Then we have from (\ref{det}) that $\mathrm{det}(r) = \prod_k (\mathrm{det}(a_k))^k$, where the product is only over $k$ with $m_k \neq 0$.  Since any choice of $a_k \in GL_{m_k}(q)$ or $GU_{m_k}(q)$ will yield an element $g \in C_G(u)$, then given any prescribed elements $\delta_k \in \F_q^{\times}$ or $C_{q+1}$, we can choose $a_k$ such that $\mathrm{det}(a_k) = \delta_k$.  Then $\mathrm{det}(g) = \mathrm{det}(r) = \prod_k \delta_k^k$.

The assumption that $m_k = 0$ when $k$ is odd implies that for any choice of the $a_k$, and so for any $g \in C_G(u)$, we must have $\mathrm{det}(g)$ is a square.
\end{proof}

\subsection{An important example}

We briefly return to Theorem \ref{slnreal} of Gill and Singh.  In \cite{GiSi11}, the proofs of Theorem \ref{slnreal}(i) and the existence direction of (ii) are obtained by constructing explicit reversing elements $h \in SL_n(q)$ of the elements in question.  The more subtle argument is the converse statement of (ii), that is, if $n \equiv 2($mod $4)$ and $q \equiv 3($mod $4)$, and $g \in SL_n(q)$ is real in $GL_n(q)$ and has no elementary divisor of the form $f(t)^k$ with $k$ odd, then $g$ is not real in $SL_n(q)$.  In \cite[Proof of Proposition 5.5]{GiSi11}, it is proved that such an element has a reversing element $h \in GL_n(q)$ such that $\mathrm{det}(h) = -1$.  It is then stated that also there cannot exist reversing elements of $g$ in $GL_n(q)$ which have determinant $1$, although some of the details of the proof of this claim are omitted.  As similar arguments are crucial to the main results in the present paper, we fill in the details of this proof in this section.

We first consider the case that $g$ is unipotent, and so $g$ has no elementary divisor of the form $(t-1)^k$ with $k$ odd.  That is, $g$ is in the unipotent class corresponding to a partition $\mu = (1^{m_1} 2^{m_2} \cdots)$ such that $m_k = 0$ whenever $k$ is odd.  Given the reversing element $h \in GL_n(q)$ of $g$, if $h_1 \in GL_n(q)$ is any other reversing element of $g$, then we must have $h_1 = h a$ for some $a \in C_{GL_n(q)}(g)$.  By Lemma \ref{centdetsquare}, we have $\mathrm{det}(a)$ is a square in $\F_q^{\times}$, and $\mathrm{det}(h) = -1$ is a nonsquare in $\F_q^{\times}$ since $q \equiv 3($mod $4)$, and so $\mathrm{det}(h_1)$ is a nonsquare.  In particular, $\mathrm{det}(h_1) \neq 1$ and $h_1 \not\in SL_n(q)$.

In the case of general $g$, we consider the Jordan decomposition $g = su$ with $u$ unipotent.  We are assuming that $g$ has no elementary divisor of the form $f(t)^k$ with $k$ odd, so if $g \in \cC_{\bmu}$ then for every $f \in \I$ the partition $\bmu(f)$ has no odd parts.  It follows from Proposition \ref{Jdecomp} that $u$ has no elementary divisor of the form $(t-1)^k$ with $k$ odd, since the corresponding partition $\mu_u$ has no odd parts.  From the previous case, we know that $u$ is not real in $SL_n(q)$.  Now $g^{-1} = s^{-1} u^{-1}$, and if $ygy^{-1} = g^{-1}$ for some $y \in SL_n(q)$, then $g^{-1} = (ysy^{-1})(yuy^{-1})$ with $ysy^{-1}$ semisimple and $yuy^{-1}$ unipotent.  By the uniqueness of the Jordan decomposition, we would then have $yuy^{-1} = u^{-1}$, contradicting the fact that $u$ is not real in $SL_n(q)$.  Thus $g$ is not real in $SL_n(q)$.

\section{Initial observations} \label{initialobs}

We begin by recalling and adapting some of the results of Gill and Singh \cite{GiSi11} to use in our situation.  The following is a generalization of \cite[Lemma 4.2]{GiSi11} which has essentially the same proof, so we omit it here.

\begin{lemma} \label{realclasssplit} Let $G$ be a group and $N \lhd G$ a normal subgroup, with $g \in N$ in the $G$-conjugacy class $\cC$.  Suppose $\cC$ is the disjoint union $\cC = \cC_1 \cup \cdots \cup \cC_s$ where each $\cC_i$ is an $N$-conjugacy class.  Let $x_i \in G$ such that $x_i g x_i^{-1} \in \cC_i$ for each $i$.  Then $g$ is real (or strongly real) in $N$ if and only if $x_i g x_i^{-1}$ is real (or strongly real) in $N$ for each $i$.
\end{lemma}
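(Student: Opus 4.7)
The plan is to exploit the normality of $N$ in $G$, which ensures that conjugation by any element of $G$ restricts to an automorphism of $N$. This automorphism is the mechanism by which reality (and strong reality) in $N$ is transported between $G$-conjugates of $g$. Concretely, I would first record the following observation: for any $x \in G$ and any $y \in N$, if $n \in N$ reverses $y$ in $N$ (so $nyn^{-1} = y^{-1}$), then $xnx^{-1} \in N$ satisfies
\[(xnx^{-1})(xyx^{-1})(xnx^{-1})^{-1} = x(nyn^{-1})x^{-1} = (xyx^{-1})^{-1},\]
so $xnx^{-1}$ is a reversing element for $xyx^{-1}$, lying in $N$. Moreover, if $n^2 = 1$ then $(xnx^{-1})^2 = 1$, so strong reality in $N$ is preserved as well.

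With this in hand, the forward direction $(\Rightarrow)$ is immediate: if $g$ is real (resp.\ strongly real) in $N$, then applying the observation with $x = x_i$ and $y = g$ shows that each $x_i g x_i^{-1}$ is real (resp.\ strongly real) in $N$. For the reverse direction $(\Leftarrow)$, since $g \in \cC$ and $\cC$ is the disjoint union of the $\cC_i$, we have $g \in \cC_j$ for some $j$. Then $x_j g x_j^{-1}$ also lies in $\cC_j$ and is therefore $N$-conjugate to $g$; writing $g = n_0 (x_j g x_j^{-1}) n_0^{-1}$ with $n_0 \in N$ and applying the same observation now with $x = n_0 \in N$ transfers reality (or strong reality) from $x_j g x_j^{-1}$ to $g$.

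Nothing in this argument is genuinely hard; the only thing to track is that the candidate reversing element $xnx^{-1}$ actually lies in $N$, which is precisely what normality guarantees, and that it remains an involution in the strongly real case, which is automatic because conjugation by $x$ is a group homomorphism. The lemma should be viewed as the statement that reality and strong reality in $N$ are $\Aut(N)$-equivariant properties; its usefulness in what follows is that it lets one verify reality on all $s$ of the split $N$-classes inside $\cC$ by testing any single convenient $G$-conjugate of $g$.
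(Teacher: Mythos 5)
Your proof is correct, and it is exactly the argument the paper intends: the paper omits its own proof, deferring to \cite[Lemma 4.2]{GiSi11}, whose proof is precisely this transport of a reversing element (and of the involution condition) by conjugation, with normality of $N$ guaranteeing the conjugated reverser stays in $N$. One cosmetic remark: the reverse direction needs no appeal to which $\cC_j$ contains $g$ --- applying your observation with $x = x_i^{-1}$ to $y = x_i g x_i^{-1}$ returns reality (or strong reality) directly to $g$ from any single $i$.
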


Taking $G = GU_n(q)$ and $N = SU_n(q)$ in Lemma \ref{realclasssplit}, we see that for $g \in SU_n(q)$, to determine whether the $SU_n(q)$-class of $g$ is real or strongly real in $SU_n(q)$, it suffices to consider any $GU_n(q)$-conjugate of $g$.

Given a group $G$ and $g \in G$, define the subgroup $R_G(g)$ by
$$R_G(g) = \{ h \in G \, \mid \, hgh^{-1} = g \text{ or } g^{-1} \}.$$
Unless $g^2 = 1$, we have $[R_G(g) : C_G(g)] = 2$.  The following result is an adaptation of the first paragraph of \cite[Proof of Proposition 4.4]{GiSi11}.

\begin{proposition} \label{qeven}  Let $q$ be even and let $g \in SU_n(q)$ be real in $GU_n(q)$.  Then $g$ is real in $SU_n(q)$.
\end{proposition}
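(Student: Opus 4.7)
The plan is to take a reversing element $h \in GU_n(q)$ of $g$ and modify it by multiplying by a suitable element of $C_{GU_n(q)}(g)$ so that the result has determinant $1$, exploiting the fact that $q$ even makes $|C_{q+1}| = q+1$ odd.

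First, I would fix $h \in GU_n(q)$ with $hgh^{-1} = g^{-1}$ and observe the standard fact that $h^2 \in C_{GU_n(q)}(g)$: indeed, $h^2 g h^{-2} = h g^{-1} h^{-1} = (hgh^{-1})^{-1} = g$. Consequently $\det(h)^2 = \det(h^2)$ lies in $\det(C_{GU_n(q)}(g))$, which is a subgroup of $C_{q+1}$.

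The key observation is that since $q$ is even, the group $C_{q+1}$ has odd order, so the squaring map $x \mapsto x^2$ on $C_{q+1}$ is a bijection. In particular, every element of $C_{q+1}$ is a power of its own square. Applied to $\det(h)$, this means $\det(h)$ already lies in $\langle \det(h)^2\rangle \subseteq \det(C_{GU_n(q)}(g))$. So I can choose $z \in C_{GU_n(q)}(g)$ with $\det(z) = \det(h)$. Setting $h_1 := h z^{-1}$, I get $\det(h_1) = 1$, so $h_1 \in SU_n(q)$, and moreover
\[
h_1 g h_1^{-1} = h z^{-1} g z h^{-1} = h g h^{-1} = g^{-1},
\]
since $z$ commutes with $g$. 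Thus $g$ is real in $SU_n(q)$.

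There is no real obstacle here; the only point requiring care is arranging that the modification by a centralizing element can absorb all of $\det(h)$, and this is precisely what oddness of $|C_{q+1}|$ guarantees. Note also that no use of the full structure of $C_{GU_n(q)}(g)$ from Lemma \ref{centdetsquare} is needed; the two-line computation $h^2 \in C_{GU_n(q)}(g)$ together with the odd-order trick suffices.
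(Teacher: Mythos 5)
Your proof is correct, but it takes a different route from the paper's. The paper argues by contradiction with an index count: setting $G = GU_n(q)$, $N = SU_n(q)$, and $R_G(g) = \{h \in G : hgh^{-1} = g^{\pm 1}\}$, if $g$ were not real in $N$ then $R_G(g) \cap N \leq C_G(g)$ would force $[R_G(g)N : N]$ to be even, contradicting the fact that it divides $[G:N] \mid (q+1)$, which is odd. Your argument is instead constructive: you exhibit a determinant-one reversing element by absorbing $\det(h)$ into the centralizer, using that squaring is a bijection on the odd-order group $C_{q+1}$. Both proofs ultimately rest on the same fact (oddness of $q+1$), but yours avoids the case split on $g^2 = 1$ and produces an explicit element. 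It is worth noting that your witness $z$ can be taken to be $h^{2k}$ for suitable $k$, so that $h_1 = h^{1-2k}$ is an odd power of $h$; in that form your argument is essentially the proof of the paper's Lemma \ref{detsquare} (there stated only for $q$ odd, but valid in general: if $\det(h)$ has odd order $c$, then $h^c \in SU_n(q)$ still reverses $g$), specialized to the observation that for $q$ even every element of $C_{q+1}$ has odd order. Either approach is a complete and correct proof of the proposition.
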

\begin{proof} Let $N = SU_n(q)$ and $G = GU_n(q)$.  If $g^2 = 1$, then $g$ is real in $N$, so we assume $g^2 \neq 1$.  Thus $[R_G(g) : C_G(g)]=2$.  If $g$ is not real in $N$, then $R_G(g) \cap N \leq C_G(g)$.  This implies $[R_G(g)N : N] = [R_G(g): R_G(g) \cap N]$ is even.  But this should divide $[G: N]$, which in turn divides the odd number $q+1$.  This contradiction implies $g$ must be real in $N$.
\end{proof}

In classifying the real classes of $SU_n(q)$, Proposition \ref{qeven} allows us to now restrict our attention to the case that $q$ is odd.  We may extend the method just used to obtain more information about real classes of $SU_n(q)$.

\begin{proposition} \label{oddpower}  Suppose that $q$ is odd, and that $g \in SU_n(q)$ is real in $GU_n(q)$.  If $g$ has any elementary divisor of the form $f(t)^k$ with $k$ odd, then $g$ is real in $SU_n(q)$.
\end{proposition}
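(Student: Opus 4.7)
The plan is to modify a reversing element $h \in GU_n(q)$ of $g$ by a centralizer element so that the product lies in $SU_n(q)$. Writing $\delta := \det(h) \in C_{q+1}$, the task reduces to exhibiting $a \in C_{GU_n(q)}(g)$ with $\det(a) = \delta^{-1}$, i.e., to showing that $\delta^{-1}$ lies in $H := \det(C_{GU_n(q)}(g)) \le C_{q+1}$. Then $ha \in SU_n(q)$ is a reversing element of $g$.

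A key initial observation is that $h^2 \in C_{GU_n(q)}(g)$: applying $h$ to both sides of $hgh^{-1} = g^{-1}$ gives $h^2 g h^{-2} = g$. Consequently $\delta^2 \in H$, so the coset $\delta H$ in the cyclic quotient $C_{q+1}/H$ has order dividing $2$. Since $q+1$ is even, it therefore suffices to show that the index $[C_{q+1} : H]$ is odd: the cyclic group $C_{q+1}/H$ would then have odd order, contain no element of order $2$, and the coset $\delta H$ must collapse to the identity.

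To establish the odd-index property, I will exploit the hypothesis that some elementary divisor of $g$ has the form $f(t)^k$ with $k$ odd. Let $g = su$ be the Jordan decomposition. By Proposition \ref{Jdecomp}, the partition $\mu_u$ of the unipotent part $u$ contains the odd part $k$, arising as a Jordan block of $u|_{V_\alpha}$ inside the generalized eigenspace $V_\alpha$ of $s$ at a root $\alpha$ of $f$. Because $s$ acts as the scalar $\alpha I$ on $V_\alpha$, any element which is a scalar on a $u|_{V_\alpha}$-invariant subspace of $V_\alpha$ automatically commutes with $s$ there. Selecting a size-$k$ Jordan block in each eigenspace of the $F_\eta$-orbit of $\alpha$ and assembling them into an $F_\eta$-stable subspace, the centralizer-of-unipotents structure recalled before Lemma \ref{centdetsquare} produces elements of $C_{GU_n(q)}(g)$ acting as a scalar $\lambda$ on this subspace and as the identity elsewhere. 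Combining the determinants of these block-scalar elements with those of the central scalars $\lambda I_n$ (which contribute $\lambda^n$) and with analogous contributions coming from all other parts of all $\bmu(f')$, the subgroup $H$ ends up containing $\{\mu^e : \mu \in C_{q+1}\}$ for an exponent $e$ that divides integers including a factor of $k$. The oddness of $k$ then forces the resulting index of $H$ in $C_{q+1}$ to be odd.

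The main technical obstacle is verifying that the block-scalar element constructed above is genuinely $F_\eta$-fixed, rather than merely an element of $GL_n(\bar{\F}_q)$. This forces $\lambda$ to be selected from an appropriate norm-one subgroup of an extension of $\F_{q^2}$ compatible with the twist in $F_\eta$, following the framework of Section \ref{JdecompCent}. Once this identification is correctly carried out, the determinant computations are routine and propagate the oddness of $k$ through to the index. This parallels the strategy underlying the analogous statement for $SL_n(q)$ in \cite[Proposition 5.5]{GiSi11} (cf.\ the \emph{important example} subsection above), where the same square/nonsquare dichotomy is exploited in $\F_q^\times$.
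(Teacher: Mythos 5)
Your argument is correct in outline but takes a genuinely different route from the paper. The paper argues by contrapositive using the class-splitting machinery: if $g$ were not real in $N=SU_n(q)$, then $R_G(g)\cap N\leq C_G(g)$ forces $[R_G(g)N:C_G(g)N]=2$, which must divide $[G:C_G(g)N]=\gcd(q+1,\nu_1,\ldots,\nu_s)$ by Macdonald's formula quoted at the end of Section \ref{classes}; hence every part of the type $\nu$ is even and $g$ has no elementary divisor with odd exponent. Your proof replaces both halves of this: the observation $h^2\in C_{GU_n(q)}(g)$, giving $\delta^2\in H=\det(C_{GU_n(q)}(g))$, is an elementary and clean substitute for the $R_G(g)$ index computation (note $[G:C_G(g)N]=[C_{q+1}:H]$ since $\det$ is surjective with kernel $N$, so your target is literally the same index), and instead of citing Macdonald you re-derive the odd-index property by exhibiting centralizer elements of determinant $\delta^k$ for arbitrary $\delta\in C_{q+1}$, so that $H$ contains the $k$-th powers and $[C_{q+1}:H]$ divides $\gcd(q+1,k)$. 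What the paper's route buys is brevity: the splitting number does all the work and no centralizer structure is needed. What your route buys is self-containedness and explicitness, but at the cost of the one step you rightly flag as the technical obstacle: producing an $F_\eta$-fixed block-scalar element with the claimed determinant when $\deg f>1$ requires identifying the relevant factor of $C_{GU_n(q)}(g)$ with $GU_{|\bmu(f)|}(q^d)$ or $GL_{|\bmu(f)|}(q^d)$ and using surjectivity of the norm maps (and of $x\mapsto x^{1-q}$ onto $C_{q+1}$) to hit every $\delta\in C_{q+1}$; this is exactly the computation the paper carries out later, in the construction of $c_{f_1}$ in the proof of Lemma \ref{keyjordanlemma}, so your sketch is sound but should be completed along those lines (or by citing that construction) before the proof is airtight.
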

\begin{proof}  Let $N = SU_n(q)$ and $G = GU_n(q)$, and suppose that $g$ is not real in $N$.  Then we have $R_G(g) \cap N \leq C_G(g)$, and $[R_G(g) N : C_G(g) N] = 2$.  Suppose that $g$ is in a $G$-conjugacy class of type $\nu = (\nu_1, \nu_2, \ldots, \nu_s)$ as defined in Section \ref{classes}, and let $t_{\nu} = \gcd(q+1, \nu_1, \ldots, \nu_s)$.  As discussed at the end of Section \ref{classes}, we have $t_{\nu} = [G : C_G(g) N]$, which must be even since $[R_G(g) N : C_G(g) N] = 2$.  Now every part $\nu_i$ of $\nu$ must be even, so writing $\nu = (1^{n_1} 2^{n_2} \cdots)$, we have that $n_k = 0$ whenever $k$ is odd.  By definition of $\nu$, this implies $g$ has no elementary divisor of the form $f(t)^k$ with $k$ odd.
\end{proof}

Recall that we let $C_{q+1}$ denote the cyclic subgroup of $\F_{q^2}^{\times}$ of order $q+1$.  The following is our analogue of \cite[Lemma 5.1]{GiSi11}.

\begin{lemma} \label{detsquare}
Let $q$ be odd and $g \in SU_n(q)$.  Then $g$ is real in $SU_n(q)$ if and only if there is a reversing element $h \in GU_n(q)$ such that $\mathrm{det}(h)$ has odd multiplicative order in $C_{q+1}$.  In particular, if $q \equiv 1($mod $4)$, then $g$ is real in $SU_n(q)$ if and only if there is a reversing element $h \in GU_n(q)$ such that $\mathrm{det}(h) = \lambda^2$ with $\lambda \in C_{q+1}$.
\end{lemma}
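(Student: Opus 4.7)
The plan is to leverage a single squaring identity: whenever $h \in GU_n(q)$ satisfies $hgh^{-1} = g^{-1}$, I have $h^2 g h^{-2} = h(hgh^{-1})h^{-1} = hg^{-1}h^{-1} = g$, so $h^2 \in C_{GU_n(q)}(g)$. Setting $D := \mathrm{det}(C_{GU_n(q)}(g)) \le C_{q+1}$, this tells me that for every reversing element $h$ the determinant $\mathrm{det}(h)^2$ automatically lies in $D$. All of the work reduces to exploiting this observation.

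The forward implication is immediate: if $g$ is real in $SU_n(q)$, then a reversing element inside $SU_n(q)$ has determinant $1$, of odd multiplicative order $1$.

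For the reverse implication, I would start with a reversing element $h$ with $\zeta := \mathrm{det}(h)$ of odd order $m$. By the squaring identity, $\zeta^2 \in D$. Since $m$ is odd, $\zeta^2$ and $\zeta$ generate the same cyclic subgroup of $C_{q+1}$ (explicitly $\zeta = (\zeta^2)^{(m+1)/2}$), so $\zeta \in D$ as well. I then choose $c \in C_{GU_n(q)}(g)$ with $\mathrm{det}(c) = \zeta$ and set $h' := hc^{-1}$. A quick check using $cgc^{-1} = g$ shows $h'$ still reverses $g$, and by construction $\mathrm{det}(h') = 1$, so $h' \in SU_n(q)$.

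The ``in particular'' clause is a short number-theoretic remark: when $q \equiv 1 \pmod 4$, I have $q+1 = 2 \cdot \tfrac{q+1}{2}$ with $\tfrac{q+1}{2}$ odd, so the squares in $C_{q+1}$ form the unique subgroup of odd order $\tfrac{q+1}{2}$, which coincides with the subgroup of odd-order elements. Hence in that case ``odd order'' is the same condition as ``a square''. I do not foresee a serious obstacle; the whole argument rests on the squaring identity together with elementary order arithmetic in the cyclic group $C_{q+1}$.
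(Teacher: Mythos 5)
Your argument is correct and rests on the same core observation as the paper's proof, namely that $h^2$ centralizes $g$, so a reversing element whose determinant has odd order can be adjusted to one of determinant $1$ without losing the reversing property. The paper takes the slightly more direct route of replacing $h$ by $h^c$ where $c$ is the odd order of $\mathrm{det}(h)$ --- then $\mathrm{det}(h^c)=1$ and $h^c g h^{-c}=g^{-1}$ because $c$ is odd --- which avoids introducing the subgroup $D$ and selecting a centralizer element, but the two arguments are essentially the same.
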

\begin{proof} If $g$ is real in $SU_n(q)$, then there is a reversing element with $\mathrm{det}(h) = 1$ and we are done.  Conversely, suppose $hgh^{-1} = g^{-1}$ with $h \in GU_n(q)$ such that $\mathrm{det}(h)$ has odd multiplicative order $c$.  Then $\mathrm{det}(h^c) = \mathrm{det}(h)^c = 1$, so $h^c \in SU_n(q)$.  Since $c$ is odd, we also have $h^c g (h^c)^{-1} = hgh^{-1} = g^{-1}$, and so $g$ is real in $SU_n(q)$.

The last statement follows from the fact that when $q \equiv 1($mod $4)$,  we have $(q+1)_2 = 2$, and so the elements of odd order in $C_{q+1}$ are exactly the squares.
\end{proof}

%
%
%

We now return to a result of Wonenburger \cite{Wo66} which states that for any vector space $V$ of finite dimension over a field of characteristic not $2$, any element of any orthogonal group $O(V)$ over $V$ is strongly real in $O(V)$.  We need a slight refinement of this statement.  Recall that in the finite orthogonal group $O^{\pm}_n(q)$, with $q$ odd, that for $g \in O^{\pm}_n(q)$, the elementary divisors of $g$ are such that $g$ is real in $GL_n(q)$.  Further, every elementary divisor of $g \in O^{\pm}_n(q)$ of the form $(t \pm 1)^{2k}$ appears with even multiplicity \cite[Sec. 2.6, Case (C)]{Wa62}.

\begin{lemma} \label{orthogdet} Let $q$ be odd and let $g \in O^{\pm}_n(q)$.  
\begin{enumerate}
\item[(1)] If $g$ has an elementary divisor of the form $(t \pm 1)^k$ with $k$ odd, then there exist reversing elements $h, h' \in O^{\pm}_n(q)$ such that $h^2 = h'^2 = 1$, and $\mathrm{det}(h) = -\mathrm{det}(h')$.
\item[(2)] If $g$ has no elementary divisor of the form $(t \pm 1)^k$ with $k$ odd, then $n$ is even, and $g$ has a reversing element $h \in O_n^{\pm}(q)$ such that $h^2 = 1$ and $\mathrm{det}(h) = (-1)^{n/2}$.
\end{enumerate}
\end{lemma}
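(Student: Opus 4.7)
The plan is to invoke Wonenburger's theorem \cite{Wo66}, which supplies, for every $g \in O(V)$, a reversing involution inside $O(V)$, and then to track determinants using the canonical orthogonal decomposition of $V$ into $g$-invariant nondegenerate summands.

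For part (1), suppose $(t-\epsilon)^k$ is an elementary divisor of $g$ with $\epsilon \in \{\pm 1\}$ and $k$ odd. The orthogonal canonical form \cite[Sec. 2.6]{Wa62} isolates a $g$-invariant, nondegenerate subspace $W \subseteq V$ of dimension $k$ on which $g|_W$ acts by a single Jordan block with eigenvalue $\epsilon$, and we have $V = W \oplus W^{\perp}$ orthogonally. Applying Wonenburger separately to $g|_W$ and $g|_{W^\perp}$, obtain involutions $h_W \in O(W)$ and $h_1 \in O(W^\perp)$ reversing the respective restrictions; then $h = h_W \oplus h_1 \in O^{\pm}_n(q)$ is a reversing involution for $g$. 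The element $-I_W$ lies in $O(W)$, commutes with $g|_W$, and squares to $I_W$, so $-h_W = (-I_W)h_W$ is another involution in $O(W)$ that still reverses $g|_W$. Setting $h' = (-h_W) \oplus h_1$ gives a second reversing involution of $g$ in $O^{\pm}_n(q)$, and since $k$ is odd, $\mathrm{det}(h') = (-1)^k \mathrm{det}(h_W)\mathrm{det}(h_1) = -\mathrm{det}(h)$, as required.

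For part (2), first $n$ is forced to be even: under the hypothesis, every $(t\pm 1)^{2k}$ elementary divisor appears with even multiplicity by \cite[Sec. 2.6, Case (C)]{Wa62} and therefore contributes an even-dimensional summand; each non-self-reciprocal pair $\{f, \tilde{f}\}$ of elementary divisors contributes $2k\deg(f)$ dimensions; each self-reciprocal irreducible $f \neq t\pm 1$ has even degree. To exhibit the required $h$, decompose $V$ orthogonally into the minimal $g$-invariant, nondegenerate summands $V_i$ provided by the canonical form---each of even dimension $2m_i$ under our hypothesis---and on each such summand produce, via the Wonenburger procedure, a reversing involution whose determinant equals $(-1)^{m_i}$. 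Taking the direct sum of these local involutions yields $h \in O^{\pm}_n(q)$ with $h^2 = 1$, $hgh^{-1} = g^{-1}$, and $\mathrm{det}(h) = \prod_i (-1)^{m_i} = (-1)^{n/2}$.

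The principal technical obstacle is verifying, in part (2), that on each summand type---hyperbolic $\{f,\tilde f\}$-pairs, self-reciprocal irreducible summands of even degree, and paired $(t\pm 1)^{2k}$ summands---one can indeed select a reversing involution of determinant $(-1)^{m_i}$. Each case reduces to an explicit matrix computation on a natural basis (a ``swap'' involution in the hyperbolic case, and its analogues elsewhere), routine but somewhat lengthy. Part (1), by contrast, is essentially a one-line observation after Wonenburger: the key point is that a single Jordan block of odd dimension $k$ admits the central involution $-I_W \in O(W)$ of determinant $-1$, which toggles the sign of any reversing involution supported on $W$.
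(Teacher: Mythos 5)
Your overall strategy is the same as the paper's: invoke Wonenburger's orthogonal decomposition of $V$ into $g$-invariant summands, reverse $g$ summand by summand with involutions, and track determinants. Part (1) is complete and matches the paper essentially verbatim: the paper likewise takes the cyclic summand $V_j$ attached to $(t\pm 1)^k$ with $k$ odd and replaces the Wonenburger involution $h_j$ by $-h_j$, using that $\dim V_j = k$ is odd to flip the determinant.

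The issue is part (2). You correctly reduce it to the claim that on each minimal summand $V_i$ of even dimension $2m_i$ one can choose a reversing involution of determinant $(-1)^{m_i}$, but you then declare this ``the principal technical obstacle,'' call it ``routine but somewhat lengthy,'' and do not carry it out. That verification \emph{is} the content of part (2); without it the statement $\det(h)=(-1)^{n/2}$ is only asserted. It is also not a matter of \emph{selecting} a suitable involution from many candidates, as your phrasing suggests: the paper shows that the specific Wonenburger involution $h_i = 1_{P_i}\oplus(-1_{Q_i})$, with $P_i$ spanned by the vectors $(g_i^l+g_i^{-l})v_i$ and $Q_i$ by $(g_i^l-g_i^{-l})v_i$ for a cyclic generator $v_i$, already has the right determinant, because $\dim Q_i = \tfrac12\dim V_i$ in each cyclic case (characteristic polynomial $(f\tilde f)^{k}$ with $f\neq\tilde f$, or $f^{k}$ with $f=\tilde f$ of even degree), and $\dim P_i=\dim Q_i=2n_i$ in the paired case $V_i=U_{i1}\oplus U_{i2}$ with characteristic polynomials $(t\pm1)^{2n_i}$, giving determinant $(-1)^{2n_i}=1=(-1)^{\dim V_i/2}$ there as well. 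So $\det(h_i)=(-1)^{\dim V_i/2}$ uniformly and the product telescopes to $(-1)^{n/2}$. To complete your proof you need to supply this dimension count (which follows from Wonenburger's Lemmas 2 and 5, where $P_i$ and $Q_i$ are shown to be the $+1$ and $-1$ eigenspaces of an involution reversing a cyclic self-reciprocal transformation); your parenthetical appeal to ``a swap involution in the hyperbolic case'' is a plausible alternative construction but would still require you to check that it lies in the orthogonal group of the relevant form and to compute its determinant, which you have not done.
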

\begin{proof} We let $V = \F_q^n$ with some underlying symmetric form $(\cdot, \cdot)$.  Then by \cite[Remark I after Lemma 5]{Wo66} $V$ can be decomposed as a direct sum of mutually orthogonal $g$-invariant subspaces, $V = \bigoplus_i V_i$, where we write $g_i = g|_{V_i}$, such that either $V_i$ is cyclic with respect to $g$ (and so with respect to $g_i$), or $V_i = U_{i1} \oplus U_{i2}$ where $U_{i1}$ and $U_{i2}$ are each cyclic with respect to $g_i$, and the minimal and characteristic polynomials of $g_i|_{U_{i1}}$ and $g_i|_{U_{i2}}$ are of the form $(t \pm 1)^{2n_i}$.  In each case we have $g_i \in O(V_i)$, where $O(V_i)$ is the orthogonal group with respect to $(\cdot, \cdot)$ restricted to $V_i$, and in the first case the characteristic polynomial of $g_i$ is self-reciprocal and not of the form $(t \pm 1)^{2n_i}$.  Wonenburger proves \cite[Lemmas 2 and 5]{Wo66} that each $g_i$ is reversed by some $h_i \in O(V_i)$ such that $h_i^2 = 1$, so that $h = \bigoplus_i h_i \in O(V)$ reverses $g$ and satisfies $h^2 = 1$.  We have $\mathrm{det}(h) = \prod_i \mathrm{det}(h_i)$.

In the case that $V_i$ is cyclic, either $g_i$ has characteristic polynomial of the form $(f \tilde{f})^k$, where $f \neq \tilde{f}$ and $f(t)$ is irreducible in $\F_q[t]$ so that $(f \tilde{f})^k$ has even degree, or of the form $f^k$ where $f = \tilde{f}$ is self-reciprocal and irreducible in $\F_q[t]$, in which case either $\mathrm{deg}(f)$ is even or $f(t) = t \pm 1$ and $k$ is odd.  Let $k_i$ be the degree of the characteristic polynomial of $g_i$, and let $v_i$ be a cyclic generator for $V_i$ so that a basis for $V_i$ is given by $\{ v_i, g_i v_i, \ldots, g_i^{k_i - 1} v_i \}$.  By \cite[Proof of Lemma 2]{Wo66}, $g_i$ is reversed by an $h_i \in O(V_i)$ such that $h_i^2 = 1$ defined by $h_i = 1_{P_i} \oplus -1_{Q_i}$ where $P_i$ is generated by vectors of the form $(g_i^l+ g_i^{-l})v_i$ and $Q_i$ is generated by vectors of the form $(g_i^l - g_i^{-l})v_i$.  In the case that $V_i = U_{i1} \oplus U_{i2}$ with $U_{i1}$ and $U_{i2}$ cyclic, let $u_{i1}$ and $u_{i2}$ be cyclic generators.  Then $h_i = 1_{P_i} \oplus -1_{Q_i}$ where $P_i$ is generated by vectors of the form $(g_i^l + g_i^{-l})u_{i1}$ and $(g_i^l + g_i^{-l})u_{i2}$ and $Q_i$ is generated by vectors of the form $(g_i^l - g_i^{-l})u_{i1}$ and $(g_i^l - g_i^{-l})u_{i2}$.

If $g$ has an elementary divisor of the form $(t \pm 1)^k$ with $k$ odd, then this corresponds to some $g_j$ with characteristic polynomial of the form $(t \pm 1)^{k_j}$ with $k_j = 2n_j -1$ for some $n_j$.  In this case, $P_j$ has dimension $n_j$ while $Q_i$ has dimension $n_j - 1$, and $h_j = 1_{P_j} \oplus -1_{Q_j}$ has determinant $(-1)^{n_j - 1}$.  However, $-h_j$ also reverses $g_j$ and satisfies $(-h_j)^2 = 1$, and $\mathrm{det}(-h_j) = - \mathrm{det}(h_j)$.  For any $h_i$ chosen for $i \neq j$, we can take $h = \bigoplus_i h_i$ and $h' = -h_j \oplus \bigoplus_{i \neq j} h_i$, so that one of $h$ or $h'$ has determinant $1$ and the other $-1$.

Now assume that $g$ has no elementary divisor of the form $(t \pm 1)^k$ with $k$ odd.  In the case that $V_i$ is not cyclic, $U_{i1}$ and $U_{i2}$ are each cyclic (although degenerate with respect to the underlying form), with $g_i|_{U_{i1}}$ and $g_i|_{U_{i2}}$ each having characteristic polynomial $(t \pm 1)^{2n_i}$.  Then $P_i$ and $Q_i$ each have dimension $2n_i$, so that $h_i = 1_{P_i} \oplus -1_{Q_i}$ has determinant $(-1)^{2n_i} = 1$, and we note that the characteristic polynomial of $g_i$ has degree $4n_i$.  

Now consider the case that $V_i$ is cyclic.  As discussed above, either $g_i$ has characteristic polynomial $(f_i \tilde{f_i})^{k_i}$ with $f_i \neq \tilde{f_i}$ or $f_i^{k_i}$ with $f_i = \tilde{f_i}$ and $f_i(t) \neq t \pm 1$.  In both cases, $\mathrm{dim} \; V_i$ is even, so that $n = \sum_i \mathrm{dim} \; V_i$ is now seen to be even.  In the first case, if $\mathrm{deg}(f_i) = d_i$, then let $n_i = d_i k_i$, so $h_i$ has determinant $(-1)^{n_i}$ and $n_i$ is the dimension of $Q_i$.  In the second case, we must have $\mathrm{deg}(f_i) = 2d_i$ even, and again we have $\mathrm{det}(h_i) = (-1)^{n_i}$ where $n_i = d_i k_i$ is the dimension of $Q_i$.  It now follows that $\mathrm{det}(h) = \prod_i \mathrm{det}(h_i) = (-1)^{n/2}$.
\end{proof}

We note that Lemma \ref{orthogdet} implies that if either $n$ is odd or $4|n$, every element of $SO^{\pm}_n(q)$ is strongly real in $SO^{\pm}_n(q)$, which was also mentioned by Gow \cite[p. 250]{Go85}.  We address the case that $n \equiv 2($mod $4)$ in Section \ref{strongreal}.

Consider a  conjugacy class $\cC_{\bmu}$ of $GU_n(q)$.  Let $\U_{\bmu}$ be the set of $f \in \U$ such that $\bmu(f) \neq \varnothing$.  For each $f \in \U_{\bmu}$, define $n_f = \mathrm{deg}(f) |\bmu(f)|$ and $\bmu_f: \U \rightarrow \cP$ by $\bmu_f(f) = \bmu(f)$ and $\bmu_f(f') = \varnothing$ if $f' \neq f$.  We may take an element $g \in \cC_{\bmu}$ which is in block diagonal form, $g = \oplus g_f$, where each $g_f \in GU_{n_f}(q)$ is in the class $\cC_{\bmu_f}$ of $GU_{n_f}(q)$.

In the case that $\cC_{\bmu}$ is a real class of $GU_n(q)$, so that $\bmu(f) = \bmu(\tilde{f})$ for each $f \in \U$, as in Section \ref{realclasses}, write $g$ as above in the form $g = \oplus g_{f^*}$, where we define $g_{f^*} = g_f$ and $n_{f^*} = n_f$ if $f = \tilde{f}$, and $g_{f^*} = g_f \oplus g_{\tilde{f}}$ with $n_{f^*} = n_f + n_{\tilde{f}}$ if $f \neq \tilde{f}$.  Then each $g_{f^*}$ is a real element of $GU_{n_{f^*}}(q)$.  Also note that when $f \neq t \pm 1$, then $n_{f^*}$ is always even, since $\mathrm{deg}(f)$ is even whenever $f = \tilde{f}$ and $f \neq t \pm 1$, and in general $\mathrm{deg}(f) = \mathrm{deg}(\tilde{f})$.

We may now apply Lemma \ref{orthogdet} in the following situation.

\begin{lemma} \label{nounipreal} 
Let $q$ be odd, and $\cC_{\bmu}$ a real class in $GU_n(q)$.  Let $g \in \cC_{\bmu}$ such that $g = \oplus g_{f^*}$ as above, and let ${g^* = \oplus_{f \neq t \pm 1} g_{f^*}}$ with $n^* = \sum_{f \neq t \pm 1} n_{f^*}$.  Then $g^* \in GU_{n^*}(q)$ is reversed by an element in $SU_{n^*}(q)$.
\end{lemma}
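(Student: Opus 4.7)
The plan is to produce the required reversing element by working inside an embedded orthogonal subgroup. Since $g^*$ has no elementary divisor of the form $(t\pm 1)^k$ at all, the hypothesis of Theorem \ref{gustrong} is satisfied vacuously (every $(t\pm 1)^{2k}$-divisor occurs with even multiplicity $0$), so $g^*$ is strongly real in $GU_{n^*}(q)$ and in fact lies in some embedded orthogonal subgroup $O^{\pm}_{n^*}(q) \subseteq GU_{n^*}(q)$. As recorded just before this lemma, $n_{f^*}$ is even whenever $f \neq t\pm 1$, so $n^*$ is even. I then apply Lemma \ref{orthogdet}(2) inside $O^{\pm}_{n^*}(q)$, valid because $g^*$ has no $(t\pm 1)^k$ elementary divisor (and hence none with $k$ odd), to produce an involution $h \in O^{\pm}_{n^*}(q) \subseteq GU_{n^*}(q)$ reversing $g^*$ with $\mathrm{det}(h) = (-1)^{n^*/2}$.

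If $n^* \equiv 0 \pmod 4$, then $\mathrm{det}(h) = 1$, so $h \in SU_{n^*}(q)$ and this case is finished. The substantive case is $n^* \equiv 2 \pmod 4$, where $\mathrm{det}(h) = -1$. Here I plan to exhibit $c \in C_{GU_{n^*}(q)}(g^*)$ with $\mathrm{det}(c) = -1$, so that $hc \in SU_{n^*}(q)$ reverses $g^*$. Writing $g^* = \oplus g_{f^*}$, since each $n_{f^*}$ is even while $\sum n_{f^*} \equiv 2 \pmod 4$, at least one block satisfies $n_{f^*} \equiv 2 \pmod 4$; it then suffices to construct $c_{f^*}$ with $\mathrm{det}(c_{f^*}) = -1$ in the centralizer of that single block and extend by the identity on the remaining blocks.

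The main obstacle will be producing such a $c_{f^*}$, which I will split into two sub-cases. If $f \neq \tilde f$, then $n_f$ is odd, and the centralizer of $g_{f^*} = g_f \oplus g_{\tilde f}$ in $GU_{n_{f^*}}(q)$ is the block-diagonal subgroup parametrized by $A \in C_{GL_{n_f}(q^2)}(g_f)$ (with second block determined by $A$ via the Hermitian form); at a scalar $A = \alpha I_{n_f}$ the $GU_{n_{f^*}}(q)$-determinant comes out to $\alpha^{n_f(1-q)}$, and this realizes the subgroup of $C_{q+1}$ of order $(q+1)/\gcd(q+1, n_f)$, which is even since $n_f$ is odd, so contains $-1$. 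If instead $f = \tilde f$, then $\mathrm{deg}(f) = 2d$ is even with $n_{f^*} = 2d|\bmu(f)|$, and the parity constraint $n_{f^*} \equiv 2 \pmod 4$ forces $d$ and $|\bmu(f)|$ both odd. The centralizer description from Section \ref{JdecompCent}, applied to the Jordan decomposition of $g_{f^*}$, provides a $\prod_k GU_{m_k(\bmu(f))}(q^d)$-type subgroup on which the $GU_{n_{f^*}}(q)$-determinant factors through the norm $N:C_{q^d+1} \to C_{q+1}$, with an extra $k$-th power coming from the embedding (\ref{det}); the parity of $|\bmu(f)|$ forces some odd $k$ to have $m_k(\bmu(f))$ odd, and for this $k$ the map $\alpha \mapsto N(\alpha^{km_k(\bmu(f))})$ covers the $2$-Sylow of $C_{q+1}$ (since $d$ odd makes $N$ a $2$-Sylow isomorphism), so hits $-1$.
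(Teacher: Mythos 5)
Your overall strategy is viable and genuinely different from the paper's: the paper first invokes Proposition \ref{oddpower} when $g^*$ has an elementary divisor $f(t)^k$ with $k$ odd, and observes that in the remaining case every exponent $k$ is even, which (together with $\deg f$ even for $f=\tilde f$ and the pairing $f \leftrightarrow \tilde f$ otherwise) forces $4 \mid n^*$, so Lemma \ref{orthogdet} already yields a reversing involution of determinant $(-1)^{n^*/2}=1$. In particular your hard case $n^* \equiv 2 \pmod{4}$ occurs only when some exponent is odd, i.e., precisely the case the paper dispatches in one line with Proposition \ref{oddpower}; you instead re-derive it by hand via a centralizer element of determinant $-1$. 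That plan can be made to work, but your execution of the centralizer analysis contains a genuine error.

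The problem is in your sub-case $f \neq \tilde f$: you have conflated the reciprocal duality $f \leftrightarrow \tilde f$ with the $\checkmark$-duality internal to elements of $\U$. Since the root set of any $f \in \U$ is already closed under $\lambda \mapsto \lambda^{-q}$, and the Hermitian form can pair the $\lambda$- and $\mu$-eigenspaces only when $\lambda\mu^q=1$ (and this $\mu$ is again a root of $f$), the form restricts non-degenerately to each of the $f$- and $\tilde f$-blocks; the centralizer is the direct sum $C_{GU_{n_f}(q)}(g_f)\oplus C_{GU_{n_{\tilde f}}(q)}(g_{\tilde f})$, not a $GL_{n_f}(q^2)$ glued across an isotropic pairing. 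Consequently your element $\alpha I_{n_f}\oplus\alpha^{-q}I_{n_f}$ lies in $GU_{n_{f^*}}(q)$ only for $\alpha\in C_{q+1}$, in which case its determinant $\alpha^{2n_f}$ is a square in $C_{q+1}$ and never equals $-1$ when $q\equiv 1\pmod{4}$. The repair is to take $\alpha I_{n_f}\oplus I_{n_{\tilde f}}$ with $\alpha\in C_{q+1}$, of determinant $\alpha^{n_f}$ with $n_f$ odd. A milder misidentification occurs in the sub-case $f=\tilde f$: since $\deg f=2d$ is even, $f=f_2f_2^{\checkmark}$ and the reductive part of the centralizer has type $\prod_k GL_{m_k}(q^{2d})$ with the twisted determinant $\mathrm{N}_{q^{2d}/q^2}(\cdot)^{1-q}$, not $\prod_k GU_{m_k}(q^{d})$ with a norm $C_{q^d+1}\to C_{q+1}$; the image in $C_{q+1}$ is still all of $C_{q+1}$ raised to the odd power $km_k$, so the conclusion there survives, but the stated structure is wrong.
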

\begin{proof}  Since $g$ is real in $GU_n(q)$, we see that $g^*$ is real in $GU_{n^*}(q)$.  Also, we have $g^* \in SU_{n^*}(q)$ since $g^*$ is real and has no elementary divisor of the form $(t+1)^k$.  Now, if $g^*$ has an elementary divisor of the form $f(t)^k$ with $k$ odd, then the statement follows by Proposition \ref{oddpower}, so we may assume $g^*$ has no such elementary divisors.

If $f(t)^k$ is an elementary divisor of $g^*$ and $f$ is self-reciprocal, then $\mathrm{deg}(f)$ is even, while if $f$ is not self-reciprocal, then $\tilde{f}(t)^k$ is also an elementary divisor of $g^*$.  It follows that $4 | n^*$.  Since $g^*$ is real as an element of $GU_{n^*}(q)$ and has no powers of $t \pm 1$ as elementary divisors, it follows from the work of Wall \cite[Sec. 2.6, Cases (A) and (C)]{Wa62} that $g^*$ is an element of some orthogonal group $O^{\pm}_{n^*}(q)$ embedded in $GU_{n^*}(q)$.  Since $4 | n^*$, it follows from Lemma \ref{orthogdet} that $g^*$ is reversed by an element of the embedded $SO^{\pm}_{n^*}(q)$, and so $g^*$ is real in $SU_{n^*}(q)$.
\end{proof}

In light of Lemma \ref{nounipreal}, we are particularly interested in reversing elements of $g_{t-1}$ or $g_{t+1}$.  We therefore turn our attention to unipotent elements.

\section{Real unipotent elements} \label{RealUnis}

First we consider regular unipotent elements in $GU_n(q)$ (and so in $SU_n(q)$), where $q$ is odd.  Because of Proposition \ref{oddpower}, we may restrict our considerations to the case that $n$ is even.  

If we take our Hermitian matrix $\eta$ which defines $GU_n(q)$ to be
$$ \eta_n = \left( \begin{array}{ccc}   &  & 1 \\   & \adots &  \\ 1 &  &  \end{array} \right),$$
then we may find a regular unipotent element in $GU_n(q)$ which is upper unitriangular with all super-diagonal entries nonzero.  We fix $u$ to be such an element.

\begin{lemma} \label{reverseu} Let $q$ be odd, $n = 2m$ be even, and let $u \in GU_n(q)$ be the regular unipotent element just described.  Then any reversing element $h \in GU_n(q)$ of $u$ must be upper triangular with diagonal entries $(\beta, -\beta, \ldots, \beta, -\beta)$ such that $\beta \in \F_{q^2}^{\times}$ with $\beta^{q+1} = -1$, and such that $\mathrm{det}(h) = (-1)^m \beta^n$.
\end{lemma}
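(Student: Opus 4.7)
The plan is to prove the lemma in three steps, each exploiting a different aspect of the structure of a regular unipotent element.

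First, I would argue that any reversing element $h$ must be upper triangular, and this is the subtlest step. The key observation is that a regular unipotent element of $GL_n$ lies in a unique Borel subgroup. Since $u$ is upper unitriangular with all super-diagonal entries nonzero, it is regular unipotent, and the unique Borel containing it is the upper triangular Borel $B$. Writing $u = I + N$ and expanding $u^{-1} = \sum_{k \geq 0}(-N)^k$ shows that $u^{-1}$ is also upper unitriangular with nonzero super-diagonal, hence is regular unipotent with the same unique Borel $B$. The relation $huh^{-1} = u^{-1}$ then forces $hBh^{-1} = B$, and since $B$ is self-normalizing in $GL_n$, we conclude $h \in B$.

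Next, writing $h$ as upper triangular with diagonal $(d_1, \ldots, d_n)$ and letting $c_1, \ldots, c_{n-1}$ denote the (nonzero) super-diagonal entries of $u$, I would read off the $(i, i+1)$ entries of the relation $hu = u^{-1} h$. The triangularity of $h$ kills all but two terms on each side, producing $d_i c_i + h_{i,i+1} = h_{i,i+1} - c_i d_{i+1}$, so that $d_{i+1} = -d_i$ for every $i$. Setting $\beta = d_1$ yields the claimed alternating diagonal $(\beta, -\beta, \ldots, \beta, -\beta)$.

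Finally, I would impose the unitary condition ${}^\top F(h) \eta_n h = \eta_n$ and examine its $(i, n+1-i)$ entry. The upper triangular shape of $h$ combined with the anti-diagonal form of $\eta_n$ leaves only the single term $d_i^q d_{n+1-i}$, which (using $n = 2m$ and the alternating sign pattern) simplifies to $-\beta^{q+1}$. Equating this with $(\eta_n)_{i,n+1-i} = 1$ gives $\beta^{q+1} = -1$, and in particular $\beta \in \F_{q^2}^\times$. The determinant formula is then immediate: $\mathrm{det}(h) = \prod_i d_i = \beta^m(-\beta)^m = (-1)^m \beta^n$. The rest of the argument is a bookkeeping exercise once upper triangularity has been secured, so the uniqueness-of-Borel step is the main obstacle.
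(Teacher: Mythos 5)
Your proof is correct and follows essentially the same route as the paper's: establish upper triangularity of $h$ from the regular unipotent structure of $u$, read off the alternating diagonal from $hu = u^{-1}h$, and extract $\beta^{q+1}=-1$ from the unitary condition; the paper merely asserts the first two steps and checks the unitary condition on the first diagonal entry of $F_{\eta}(h)=h$ rather than on the $(i,n+1-i)$ entry of ${}^\top F(h)\eta_n h = \eta_n$, which is equivalent. Your unique-Borel justification of upper triangularity is a valid filling-in of a detail the paper leaves implicit.
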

\begin{proof}  The fact that $h$ must be upper triangular follows from the fact that $u$ is upper unitriangular, and all super-diagonal entries of $u$ are nonzero.  Since $u^{-1}$ is also upper unitriangular and the super-diagonal entries are the negatives of those of $u$, it follows that the diagonal of $h$ must be of the form $(\beta, -\beta, \ldots, \beta, -\beta)$ for some $\beta \in \F_{q^2}^{\times}$.  It immediately follows that $\mathrm{det}(h) = (-1)^m \beta^n$.  Finally, since $h \in GU_n(q)$, we have $\eta_n^{-1}({^\top F(h)}^{-1}) \eta_n = h$.  By considering the first entry in the diagonal on each side, we have $-\beta^{-q} = \beta$, so $\beta^{q+1} = -1$. 
\end{proof}

We can first deal with the case $q \equiv 1($mod $4)$.  

\begin{lemma} \label{1mod4} Let $n = 2m$ be even and $q \equiv 1($mod $4)$.  If $u \in GU_n(q)$ is the regular unipotent element fixed above, then $u$ is real in $SU_n(q)$.
\end{lemma}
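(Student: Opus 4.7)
The plan is to apply Lemma \ref{detsquare}, which (for $q \equiv 1($mod $4)$) reduces the problem to exhibiting some reversing element in $GU_n(q)$ whose determinant is a square in $C_{q+1}$.

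First I would confirm that a reversing element exists in $GU_n(q)$. Since $u$ is unipotent, its $GU_n(q)$-class $\cC_{\bmu}$ has $\bmu$ supported only on the self-reciprocal polynomial $t - 1$, so $\bmu(f) = \bmu(\tilde{f})$ for every $f \in \U$, and the class is real in $GU_n(q)$ by the criterion recalled in Section \ref{realclasses}. Fix such a reversing element $h \in GU_n(q)$.

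By Lemma \ref{reverseu}, $\det(h) = (-1)^m \beta^{2m}$ for some $\beta \in \F_{q^2}^{\times}$ with $\beta^{q+1} = -1$. Since $(\beta^{2m})^{q+1} = (\beta^{q+1})^{2m} = 1$, the determinant lies in $C_{q+1}$. Because $C_{q+1}$ is cyclic of even order, an element $x \in C_{q+1}$ is a square precisely when $x^{(q+1)/2} = 1$, so the remaining task is a single parity computation.

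The key calculation, using $\beta^{q+1} = -1$, is
\[
\det(h)^{(q+1)/2} \;=\; (-1)^{m(q+1)/2}\, \beta^{m(q+1)} \;=\; (-1)^{m(q+1)/2}\,(-1)^m \;=\; (-1)^{m(q+3)/2}.
\]
When $q \equiv 1 ($mod $4)$, $q+3$ is divisible by $4$, so $(q+3)/2$ is even and this power equals $1$, regardless of $m$. Hence $\det(h)$ is a square in $C_{q+1}$, and Lemma \ref{detsquare} then yields that $u$ is real in $SU_n(q)$.

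I do not anticipate a serious obstacle here: Lemma \ref{reverseu} has already pinned $\det(h)$ down to the single free parameter $\beta$ subject to $\beta^{q+1}=-1$, and the hypothesis $q \equiv 1($mod $4)$ is precisely the input needed to force the exponent $m(q+3)/2$ to be even. The only conceptual point worth flagging is that, although $-1$ is itself a non-square in $C_{q+1}$ under this hypothesis, the factor $\beta^{2m}$ is also a non-square exactly when $m$ is odd, so the parities cancel in the product $(-1)^m \beta^{2m}$.
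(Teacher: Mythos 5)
Your proof is correct and follows essentially the same route as the paper: both pin down $\det(h) = (-1)^m\beta^{2m}$ via Lemma \ref{reverseu} and then verify the square condition of Lemma \ref{detsquare}. The only difference is cosmetic — you do one uniform parity computation by raising to the power $(q+1)/2$, whereas the paper splits into the cases $4\mid n$ and $n\equiv 2 \ (\mathrm{mod}\ 4)$ and argues with a generator of $C_{q+1}$.
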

\begin{proof} Since $u$ is real in $GU_n(q)$, there is some reversing element $h \in GU_n(q)$.  By Lemma \ref{reverseu}, $h$ is upper triangular with diagonal entries $(\beta, -\beta, \ldots, \beta, -\beta)$ for some $\beta \in \F_{q^2}^{\times}$ such that $\beta^{q+1} = -1$, and $\mathrm{det}(h) = (-1)^m \beta^n$.  In particular, $\beta^2 \in C_{q+1}$.

If $n \equiv 0($mod $4)$, then write $n = 4k$.  Then $\mathrm{det}(h) = \beta^n = (\beta^{2k})^2$ with $\beta^{2k} \in C_{q+1}$.  By Lemma \ref{detsquare}, $u$ is real in $SU_n(q)$.

Now assume $n \equiv 2($mod $4)$, and write $n = 4k+2 = 2m$.  We have $\mathrm{det}(h) =  -\beta^{2m}$.   Let $\gamma$ be a multiplicative generator for $C_{q+1}$.  Since $(q+1)_2 = 2$, then $-1 = \gamma^{(q+1)/2}$ is not the square of an element in $C_{q+1}$.  We have $\beta^2 \in C_{q+1}$, so $\beta^2 = \gamma^s$ for some integer $s$.  If $s$ is even, then  
 $\beta \in C_{q+1}$, which contradicts $\beta^{q+1} = -1$.  Thus since $m=2k+1$ is odd, we see $\beta^{2m}$ is some odd power of $\gamma$, which implies $-\beta^{2m}$ is the square of some element in $C_{q+1}$.  By Lemma \ref{detsquare}, $u$ is real in $SU_n(q)$.
\end{proof}

The case $q \equiv 3($mod $4)$ is slightly different.

\begin{lemma} \label{3mod4} Let $n=2m$ be even and $q \equiv 3($mod $4)$.  If $u \in GU_n(q)$ is the regular unipotent fixed above, then $u$ is real in $SU_n(q)$ if and only if $n$ is divisible by $(q^2 - 1)_2$.
\end{lemma}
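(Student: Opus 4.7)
The plan is to combine Lemmas \ref{reverseu} and \ref{detsquare} to reduce the question to a concrete exponent computation inside the cyclic subgroup $C_{2(q+1)}\subset\F_{q^2}^{\times}$. By Lemma \ref{detsquare}, $u$ is real in $SU_n(q)$ iff some reversing element $h\in GU_n(q)$ has $\det(h)$ of odd multiplicative order in $C_{q+1}$. By Lemma \ref{reverseu}, every such $h$ has $\det(h)=(-1)^m\beta^n$ with $\beta\in\F_{q^2}^{\times}$ satisfying $\beta^{q+1}=-1$, and conversely every such $\beta$ arises from some reversing $h$. So the whole question becomes: does there exist $\beta\in\F_{q^2}^{\times}$ with $\beta^{q+1}=-1$ such that $(-1)^m\beta^n$ has odd order in $C_{q+1}$?

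Next I would set up coordinates. Since $q$ is odd, $2(q+1)\mid q^2-1$, so $\F_{q^2}^{\times}$ contains a cyclic subgroup $C_{2(q+1)}$; fix a generator $\delta$ and put $\gamma=\delta^2$, so $\gamma$ generates $C_{q+1}$ and $\delta^{q+1}$ is the unique element of order $2$ in $C_{2(q+1)}$, namely $-1$. The condition $\beta^{q+1}=-1$ forces $\beta^{2(q+1)}=1$, so $\beta=\delta^k\in C_{2(q+1)}$; then $\delta^{k(q+1)}=\delta^{q+1}$ shows $k$ is odd, and conversely any odd $k$ gives a valid $\beta$. Using $(-1)^m=\gamma^{m(q+1)/2}$ and $\beta^n=\delta^{2mk}=\gamma^{mk}$, I obtain
\[
\det(h)\;=\;\gamma^{m\bigl((q+1)/2 + k\bigr)}\qquad (k\text{ odd}).
\]

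Now the key observation. Since $q\equiv 3\pmod 4$ we have $(q+1)_2\geq 4$, so $(q+1)/2$ is even; combined with $k$ odd, the integer $(q+1)/2+k$ is always odd, regardless of $k$. The order of $\gamma^{m((q+1)/2+k)}$ in $C_{q+1}$ is odd iff $(q+1)_2$ divides $m((q+1)/2+k)$, and because the second factor is odd this collapses to the condition $(q+1)_2\mid m$. Thus the existence question has a uniform answer: an appropriate reversing element exists iff $(q+1)_2\mid m$, which in terms of $n=2m$ reads $2(q+1)_2\mid n$. Finally, $q\equiv 3\pmod 4$ gives $(q-1)_2=2$, so $2(q+1)_2=(q-1)_2(q+1)_2=(q^2-1)_2$, yielding the stated criterion.

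There is no real obstacle here beyond bookkeeping; the only subtlety to be careful about is that the condition $\beta^{q+1}=-1$ precisely pins down the parity of the exponent $k$, and that the ``$(-1)^m$'' contribution shifts the exponent by the \emph{even} integer $m(q+1)/2$. These two facts together make the $2$-adic condition on the order of $\det(h)$ independent of the choice of $\beta$, which is what allows the ``if and only if'' to be read off cleanly.
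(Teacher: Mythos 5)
Your proposal is correct and follows essentially the same route as the paper: combine Lemma \ref{reverseu} with Lemma \ref{detsquare}, observe that $\beta^{q+1}=-1$ forces $(|\beta|)_2=2(q+1)_2=(q^2-1)_2$, and read off that the $2$-part condition on the order of $\det(h)=(-1)^m\beta^{2m}$ is independent of $\beta$ and holds exactly when $(q+1)_2\mid m$. Your explicit coordinates in $C_{2(q+1)}$ just make the paper's terser order computation more transparent; no gap.
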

\begin{proof} Again, $u$ is real in $GU_n(q)$, and so by Lemma \ref{reverseu} any reversing element $h \in GU_n(q)$ of $u$ must be upper triangular with diagonal entries $(\beta, -\beta, \ldots, \beta, -\beta)$ for some $\beta \in \F_{q^2}^{\times}$ such that $\beta^{q+1} = -1$, and $\mathrm{det}(h) = (-1)^m \beta^n$.  If $|\beta|$ is the multiplicative order of $\beta$, then $\beta^{q+1} = -1$ implies that $(|\beta|)_2 = 2(q+1)_2 = (q^2 - 1)_2$.  By Lemma \ref{detsquare}, we know $u$ is real if and only if a reversing $h \in GU_n(q)$ exists such that $\mathrm{det}(h) = (-1)^n \beta^{2m}$ has odd order in $\F_{q^2}^{\times}$.  However, the fact that $(|\beta|)_2 = 2(q+1)_2$ means that this happens if and only if $m$ is divisible by $(q+1)_2$.  Thus $u$ is real in $SU_n(q)$ if and only if $n$ is divisible by $(q^2 - 1)_2$.
\end{proof}

We can now describe which unipotent elements are real in $SU_n(q)$ in general.

\begin{lemma} \label{uni1mod4} Let $u$ be any unipotent element in $GU_n(q)$, and suppose $q \equiv 1($mod $4)$.  Then $u$ is real in $SU_n(q)$.
\end{lemma}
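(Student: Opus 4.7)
The plan is to reduce to the two cases we have already treated. First, since every elementary divisor of $u$ is a power of $(t-1)$, which is self-reciprocal, every unipotent class of $GU_n(q)$ is real in $GU_n(q)$, so Proposition \ref{oddpower} is available. Hence, if the partition $\mu=(1^{m_1}2^{m_2}\cdots)$ parameterizing the unipotent class of $u$ has some odd part, meaning $m_k\ne 0$ for some odd $k$, then $u$ has an elementary divisor $(t-1)^k$ with $k$ odd and is therefore real in $SU_n(q)$.

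It remains to handle the case $m_k=0$ for every odd $k$. By Lemma \ref{realclasssplit}, it suffices to exhibit a single representative of the $GU_n(q)$-class of $u$ that is reversed by an element of $SU_n(q)$. Following the construction recalled in Section \ref{JdecompCent}, we can choose the defining Hermitian matrix $\eta$ to be block-diagonal, with smaller Hermitian blocks prescribed by $\mu$, and take the representative to be $u=\bigoplus_k \tilde{J}_k^{m_k}$, where each $\tilde{J}_k$ is a regular unipotent element of an embedded $GU_k(q)\subset GU_n(q)$. By our assumption every $k$ appearing in this sum is even.

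For each such even $k$, Lemma \ref{1mod4} (applied with $n$ replaced by $k$) supplies a reversing element $h_k\in SU_k(q)$ of $\tilde{J}_k$. Let $h$ be the block-diagonal matrix built from $m_k$ copies of $h_k$ in the block corresponding to $\tilde{J}_k^{m_k}$, for every $k$. Then $huh^{-1}=u^{-1}$, and $\det(h)=\prod_k \det(h_k)^{m_k}=1$, so $h\in SU_n(q)$. This proves $u$ is real in $SU_n(q)$.

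The only step that requires any care is the simultaneous choice of $\eta$ so that the block-diagonal representative actually lies in $GU_n(q)$; this is exactly the content of the construction recalled in the paragraph preceding Lemma \ref{centdetsquare}. Beyond that, the argument reuses Lemma \ref{1mod4} block-by-block, and the congruence $q\equiv 1\pmod 4$ is precisely the parity for which that lemma imposes no further restriction on the block size.
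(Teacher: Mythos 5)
Your proof is correct and follows essentially the same route as the paper: reduce via Proposition \ref{oddpower} to the case where all parts of $\mu$ are even, choose the block-diagonal Hermitian form and representative $\bigoplus_k \tilde{J}_k^{m_k}$, and invoke Lemma \ref{1mod4} block-by-block to assemble a determinant-one reversing element. Your explicit mention of Lemma \ref{realclasssplit} and of the determinant computation just makes the paper's implicit steps visible.
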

\begin{proof}  If $u$ has any elementary divisor of the form $(t-1)^k$ with $k$ odd, then $u$ is real in $SU_n(q)$ by Proposition \ref{oddpower}, so we may assume otherwise.  Suppose $u$ is in the unipotent class of $GU_n(q)$ corresponding to the partition $\mu$ of $n$, so we are assuming $\mu$ has no odd parts, and so $\mu = (\mu_1, \mu_2, \ldots) = (2^{m_2} 4^{m_4} \cdots)$.

Define $GU_n(q)$ by the Hermitian matrix in block diagonal form $\bigoplus_i \eta_{\mu_i}$, where each $\eta_{\mu_i}$ has $1$'s on the antidiagonal and $0$'s elsewhere as above.  Then there is an element in the conjugacy class corresponding to $\mu$ of the block diagonal form $\bigoplus_k \tilde{J}_k^{m_k}$, where each $\tilde{J}_k$ is regular unipotent in $GU_k(q)$ of the form considered in Lemma \ref{1mod4}.  Since each $\tilde{J}_k$ is real in $SU_k(q)$, then  $\bigoplus_k \tilde{J}_k^{m_k}$ is real in $SU_n(q)$, as is $u$.
\end{proof}

Again, the case $q \equiv 3($mod $4)$ is more complicated.  We consider elements which are slightly more general than unipotents.

\begin{lemma} \label{3mod4pmuni}  Let $g \in SU_n(q)$ such that $g$ only has elementary divisors which are powers of $t \pm 1$, and suppose $q \equiv 3($mod $4)$.  Then $g$ is real in $SU_n(q)$ if and only if one of the following holds:
\begin{enumerate}
\item[(1)] $g$ has an elementary divisor of the form $(t \pm 1)^k$ with $k$ odd.
\item[(2)] $(q^2 - 1)_2$ divides $k$ for all elementary divisors $(t \pm 1)^k$ of $g$.
\item[(3)] Write $2^r$ for the smallest power of $2$ such that $2 \leq 2^r < (q^2 - 1)_2$ and $(k)_2 = 2^r$ for some elementary divisor $(t \pm 1)^k$ of $g$.  Then $g$ has an even number of elementary divisors of the form $(t \pm 1)^{e}$ such that $(e)_2 = 2^r$.
\end{enumerate}
\end{lemma}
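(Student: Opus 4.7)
Condition (1) follows immediately from Proposition \ref{oddpower}, so we may assume every elementary divisor $(t\pm 1)^k$ of $g$ has $k$ even. As in the proof of Lemma \ref{uni1mod4}, choose the Hermitian form so that $g$ has a block-diagonal representative consisting of $\tilde{J}_{e_i}\in GU_{e_i}(q)$ for each factor $(t-1)^{e_i}$ and $-\tilde{J}_{e'_j}$ for each factor $(t+1)^{e'_j}$, where $\tilde{J}_k$ is the regular unipotent fixed before Lemma \ref{reverseu}. Since a reversing element of $\tilde{J}_k$ in $GU_k(q)$ automatically reverses $-\tilde{J}_k$, Lemma \ref{reverseu} produces a block-diagonal reversing element $h_0\in GU_n(q)$ whose block contribution to $\mathrm{det}(h_0)$ is of the form $(-1)^{k/2}\beta^k$ with $\beta\in\F_{q^2}^{\times}$ satisfying $\beta^{q+1}=-1$.

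Every reversing element of $g$ in $GU_n(q)$ equals $h_0c$ for some $c\in C_{GU_n(q)}(g)$. Writing $g=g_+\oplus g_-$ so that $g_+$ and $-g_-$ are unipotent, and applying Lemma \ref{centdetsquare} to each piece, the achievable values of $\mathrm{det}(c)$ form the set $\{\prod_{k\in K}\delta_k^k : \delta_k\in C_{q+1}\}$, where $K$ is the set of distinct exponents appearing in $g$ (an exponent in both $g_+$ and $g_-$ still contributes a single parameter because $\delta^k\gamma^k=(\delta\gamma)^k$). Let $n_k$ be the total multiplicity of exponent $k$ in $g$, and let $B_k$ be the product of the block-level $\beta$'s for blocks of exponent $k$, so that $B_k^{q+1}=(-1)^{n_k}$. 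Then
$$\mathrm{det}(h_0c)=\prod_{k\in K}(-1)^{(k/2)n_k}(B_k\delta_k)^k,$$
and by Lemma \ref{detsquare}, $g$ is real in $SU_n(q)$ iff the $\delta_k$ can be chosen so that this product has odd order in $C_{q+1}$; equivalently, trivial 2-projection in $\F_{q^2}^{\times}$.

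Let $N=(q^2-1)_2$ and fix a generator $\zeta$ of the cyclic 2-Sylow of $\F_{q^2}^{\times}$; since $q\equiv 3\,(\mathrm{mod}\,4)$, $N\geq 8$ and in particular $N/4$ is even. The 2-projection of $B_k\delta_k$ is $\zeta^{u_k}$ with $u_k$ running over all integers mod $N$ of parity equal to $n_k$, and that of $(-1)^{(k/2)n_k}$ is $\zeta^{Nkn_k/4}$; combining, the exponent $v_k=u_k+Nn_k/4$ still has the parity of $n_k$. Realness in $SU_n(q)$ thus amounts to finding $v_k\in\mathbb{Z}/N\mathbb{Z}$ with $v_k\equiv n_k\pmod 2$ such that $\sum_{k\in K}kv_k\equiv 0\pmod N$. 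Setting $M=\sum_{k\in K,\ n_k\text{ odd}}k$, this reduces to $2d'\mid M$ in $\mathbb{Z}/N\mathbb{Z}$, where $d'=\gcd(\{k:k\in K\}\cup\{N\})$. If $N$ divides every $k\in K$ then $d'=N$ and the condition is automatic, giving case (2). Otherwise $d'=2^{r_0}$ with $2^{r_0}=\min_{k\in K}(k)_2$, and $2^{r_0+1}\mid M$ holds iff the number of $k\in K$ with $(k)_2=2^{r_0}$ and $n_k$ odd is even; reducing mod $2$, this coincides with condition (3) that $s_{r_0}=\sum_{(k)_2=2^{r_0}}n_k$ is even. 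The main obstacle is keeping the parity and 2-adic bookkeeping consistent through the 2-projection reduction; once that reduction is set up, the match with cases (2) and (3) is transparent.
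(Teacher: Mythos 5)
Your argument is correct and follows the same skeleton as the paper's proof: the same block-diagonal representative $\bigoplus_k \tilde{J}_k^{m_k(\tau)} \oplus \bigoplus_k(-\tilde{J}_k)^{m_k(\nu)}$, the same description of all reversing elements as $h_0c$ with $\mathrm{det}(c)=\prod_k\delta_k^k$ coming from Lemma \ref{centdetsquare}, and the same criterion from Lemma \ref{detsquare}. The one genuine difference is the endgame. The paper first disposes of case (2) separately (under (2) each block $\pm\tilde{J}_k$ is already real in $SU_k(q)$ by Lemma \ref{3mod4}) and then, for case (3), computes with a fixed generator $\alpha$ of the $2$-part of $C_{q+1}$, splits into subcases according to the parity of $k_1$ to control the sign $(-1)^{n/2}$, and exhibits an explicit choice of $\rho_1$. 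You instead project onto the Sylow $2$-subgroup of $\F_{q^2}^{\times}$ and reduce reality to the single membership condition $M\in \gcd(2d',N)\,\Z/N\Z$, which treats (2) and (3) uniformly and absorbs the sign into the even term $Nn_k/4$; this is cleaner and arguably less error-prone. What the paper's more explicit version buys is the formula (\ref{revdet}) for the determinant of an arbitrary reversing element, which is reused verbatim in the proof of Lemma \ref{keyjordanlemma}; your packaging would require re-extracting that information there. The individual steps of your reduction --- the parity of $u_k$ matching that of $n_k$ because $(|\beta|)_2=(q^2-1)_2$, the identification $d'=2^{r_0}=\min_k(k)_2$ when (2) fails, and the congruence $M\equiv 2^{r_0}\cdot\bigl|\{k:(k)_2=2^{r_0},\ n_k\ \text{odd}\}\bigr|\ (\mathrm{mod}\ 2^{r_0+1})$ --- all check out.
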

\begin{proof}  Note that if (1) holds, then $g$ is real in $SU_n(q)$ by Proposition \ref{oddpower}.  So we now assume (1) does not hold.  Then $g$ is in the $GU_n(q)$-conjugacy class corresponding to $\bmu$, where we let 
$$\bmu(t -1) = \tau = (\tau_1, \tau_2, \ldots) = (2^{m_2(\tau)} 4^{m_4(\tau)} \cdots ),$$ and
$$ \bmu(t+1) = \nu = (\nu_1, \nu_2, \ldots) = (2^{m_2(\nu)} 4^{m_4(\nu)} \cdots),$$
so that each $\tau_i$ and each $\nu_i$ is even.  

By Lemma \ref{realclasssplit}, we may assume that $g$ is of the form $g = u \oplus v$, where $u = g_{t-1}$ and $u = g_{t+1}$ as in Section \ref{initialobs} before Lemma \ref{nounipreal}.  Note that $-v$ is unipotent.  

Similar to the proof of Lemma \ref{uni1mod4}, we define $GU_n(q)$ by the Hermitian matrix in block diagonal form $\bigoplus_i \eta_{\tau_i} \oplus \bigoplus_i \eta_{\nu_i}$.  Then there is an element in the conjugacy class corresponding to $\bmu$ in the block diagonal form $\bigoplus_k \tilde{J}_k^{m_k(\tau)} \oplus \bigoplus_k (-\tilde{J}_k)^{m_k(\nu)}$, where each $\tilde{J}_k$ is regular unipotent of the form considered in Lemma \ref{3mod4}.  If we assume (2) holds, then each $\tilde{J}_k$ and $-\tilde{J}_k$ is real in $SU_k(q)$, which implies $\bigoplus_k \tilde{J}_k^{m_k(\tau)} \oplus \bigoplus_k(-\tilde{J}_k)^{m_k(\nu)}$, and thus $g$, is real in $SU_n(q)$.

We now also assume that (2) does not hold, and we show that $g$ is real in $SU_n(q)$ if and only if (3) holds.  As in the previous paragraph, we consider the element which is $GU_n(q)$-conjugate to $g$ defined by $\tilde{g} = \tilde{u} \oplus \tilde{v} = \bigoplus_k \tilde{J}_k^{m_k(\tau)} \oplus \bigoplus_k(-\tilde{J}_k)^{m_k(\nu)}$.  

Let $m_k=m_k(\tau)+m_k(\nu)$ and for each $k$ such that $m_k \neq 0$, fix $h_k \in GU_k(q)$ to be a reversing element of $\tilde{J}_k$, so that by Lemma \ref{reverseu}, each $h_k$ is upper triangular with diagonal entries $(\beta_k, -\beta_k, \ldots, \beta_k, -\beta_k)$ for some $\beta_k \in \F_{q^2}^{\times}$ satisfying $\beta_k^{q+1} = -1$.  It follows that $h_k$ is also a reversing element for $-\tilde{J}_k$.  Now the block diagonal element $h' = \bigoplus_k \bigoplus_{i=1}^{m_k} h_k \in GU_n(q)$ is a reversing element for $\tilde{g}$ in $GU_n(q)$, and we have $\mathrm{det}(h') = (-1)^{n/2} \prod_k \beta_k^{k m_k}$.  Now every reversing element of $\tilde{g}$ in $GU_n(q)$ is of the form $h = h' c$, where $c$ can be any element of $C_{GU_n(q)}(\tilde{g})$.  Now, $C_{GU_n(q)}(\tilde{g})$ is given by the direct sum
$$C_{GU_n(q)}(\tilde{g}) = C_{GU_{|\tau|}(q)}(\tilde{u}) \oplus C_{GU_{|\nu|}(q)}(\tilde{v}).$$
Since $-\tilde{v}$ is unipotent, and has the same centralizer as $\tilde{v}$, we may apply Lemma \ref{centdetsquare} to both $\tilde{u}$ and $\tilde{v}$.  In particular, the possible determinants of elements in $C_{GU_n(q)}(\tilde{g})$ are of the form $\prod_k \delta_k^k$, where the product is over all $k$ which appear as parts of either $\tau$ or $\nu$, and $\delta_k \in C_{q+1}$ are arbitrary.   It follows that all possible determinants of reversing elements $h$ of $\tilde{g}$ in $GU_n(q)$ are of the form
$$ \mathrm{det}(h) = (-1)^{n/2} \prod_k \beta_k^{k m_k} \delta_k^k.$$

Now let $2^r$ be the smallest power of $2$ such that $2 \leq 2^r < (q^2-1)_2$ and $(k)_2 = 2^r$ for some elementary divisor $(t\pm1)^k$ of $\tilde{g}$ (and so of $g$).  Noting that all of the parts of $\tau$ and $\nu$ are assumed even, we re-label all of the distinct part sizes of $\tau$ or $\nu$ as $2k_i$ such that $2^{r-1} = (k_1)_2 \leq (k_2)_2 \leq \cdots$.  We also let $s$ be the total number of distinct part sizes of $\tau$ or $\nu$ with $2$-part $2^r$, so $(k_1)_2 = (k_2)_2 = \cdots = (k_s)_2 = 2^{r-1}$.  We show that $g$ is real in $SU_n(q)$ if and only if $\sum_{i=1}^s m_{2k_i}$ is even, where $m_{2k_i} = m_{2k_i}(\tau) + m_{2k_i}(\nu)$. 

Let $\alpha \in C_{q+1}$ with multiplicative order $|\alpha| = (q+1)_2$.  Since for each $\beta_{2k_i}$ we have $|\beta_{2k_i}|$ is divisible by $(q^2 - 1)_2$, we can write $\beta^2_{2k_i} = \alpha^{\sigma_i} \gamma_i$ for some odd integer $\sigma_i$ and some $\gamma_i \in C_{q+1}$ with odd multiplicative order.  We also write each $\delta_{2k_i} = \alpha^{\rho_i} \gamma_i'$ where $\gamma_i' \in C_{q+1}$ has odd multiplicative order and $\rho_i$ is some integer ($\delta_{2k_i} \in C_{q+1}$ is still arbitrary with freedom to choose $\rho_i$ and $\gamma_i'$).  Now
$$ \mathrm{det}(h) = (-1)^{n/2} \alpha^{\sum_i k_i(m_{2k_i} \sigma_i + 2 \rho_i)} \prod_i \gamma_i'',$$
where $\gamma_i'' = \gamma_i^{m_{2k_i} k_i} \gamma_i'^{2k_i}$ has odd multiplicative order, and so $\prod_i \gamma_i'' = \gamma$ does as well.

Now write
$$ x = \sum_{i=1}^s (k_i)_{2'} (m_{2k_i} \sigma_i + 2\rho_i) \quad \text{ and } \quad y = \sum_{i > s} \frac{k_i}{(k_1)_2} (m_{2k_i} \sigma_i + 2\rho_i),$$
so that we have
\begin{equation} \label{revdet}
\mathrm{det}(h) = (-1)^{n/2} \alpha^{(k_1)_2 (x+y)} \gamma,
\end{equation} 
where $y$ is necessarily even.

First assume that $\sum_{i=1}^s m_{2k_i}$ is odd.  Since each $(k_i)_{2'}$ and each $\sigma_i$ is odd, this implies that $x$ is odd, and so $x+y$ is odd.  Suppose that there is some $h$ as above such that $\mathrm{det}(h)$ has odd multiplicative order.  If $k_1$ is even, note first that this means $4|n$.  Since $\gamma$ has odd multiplicative order, then the only way $\mathrm{det}(h)$ can as well is if $\alpha^{(k_1)_2(x+y)}$ does.  Since $x+y$ is odd, this would mean $\alpha^{(k_1)_2}$ has odd multiplicative order.  But $|\alpha| = (q+1)_2$, while $(k_1)_2 < (q+1)_2$, a contradiction.  So suppose $k_1$ is odd, so $2k_i \equiv 2($mod $4)$ for $i \leq s$.  Since $\sum_{i=1}^s m_{2k_i}$ is odd, then $n/2$ is odd.  The only way $\mathrm{det}(h)$ can have odd multiplicative order is if $(-1)^{n/2} \alpha^{(k_1)_2 (x+y)} = - \alpha^{x+y}$ does.  But $x+y$ is odd, while $|\alpha| = (q+1)_2 \geq 4$, and so this is impossible.  Thus no such $h$ exists, and $g$ cannot be real in $SU_n(q)$ by Lemma \ref{detsquare}.

Now assume that $\sum_{i=1}^s m_{2k_i}$ is even.  In this case, we must have $4|n$.  We show that $\rho_i$ and $\gamma_i'$ can be chosen (that is, $c \in C_{GU_n(q)}(\tilde{g})$ can be chosen) so that $\mathrm{det}(h)$ has odd multiplicative order in $C_{q+1}$.  We fix $\gamma_i'$ to be some elements of odd multiplicative order in $C_{q+1}$, and we take $\rho_i = 0$ for $i > 1$.  We will choose $\rho_1$ such that 
$$ x + y =  (k_1)_{2'}( m_{2k_1} \sigma_1 + 2\rho_1 )+ \sum_{i > 1} \frac{k_i}{(k_1)_2} m_{2k_i} \sigma_i $$
is divisible by $(q+1)_2$, which will give $\alpha^{(k_1)_2 (x + y)} = 1$.  Note that for $i \leq s$ we have $k_i/(k_1)_2$ is odd while $k_i/(k_1)_2$ is even for $i > s$.  Since $(k_1)_{2'}$ and all $\sigma_i$ are odd, and $\sum_{i=1}^s m_{2k_i}$ is assumed even, then we have
$$ (k_1)_{2'} m_{2k_1} \sigma_1 + \sum_{i > 1} \frac{k_i}{(k_1)_2} m_{2k_i} \sigma_i $$ 
must be even.  Noting that $(k_1)_{2'}$ is invertible modulo $(q+1)_2$, we now choose $\rho_1$ such that
$$\rho_1 \equiv \frac{1}{2} \left(-(k_1)_{2'} m_{2k_1} \sigma_1 - \sum_{i>1} \frac{k_i}{(k_1)_2} m_{2k_i} \sigma_i \right) (k_1)_{2'} ^{-1}  (\text{mod } (q+1)_2 ).$$
This gives $\alpha^{(k_1)_2(x+y)} = 1$ as claimed, which makes $\mathrm{det}(h) = \gamma$ have odd multiplicative order in $C_{q+1}$.  By Lemma \ref{detsquare}, we now have $g$ is real in $SU_n(q)$.
\end{proof}

We restate the conditions in Lemma \ref{3mod4pmuni} for the unipotent case in a slightly refined manner as follows.

\begin{lemma} \label{uni3mod4}  Let $u \in GU_n(q)$ be unipotent, and suppose $q \equiv 3($mod $4)$.  Then $u$ is real in $SU_n(q)$ if and only if the following holds:

\noindent Write $2^r$ for the smallest power of $2$ such that $2 \leq 2^r < (q^2 - 1)_2$ and $(k)_2 = 2^r$ for some elementary divisor $(t-1)^k$ of $u$.  Then $u$ has an even number of elementary divisors of the form $(t - 1)^{e}$ such that $(e)_2 = 2^r$ (where the number of these is $0$ if the smallest power $2^r$ dividing the exponent of some elementary divisor $(t-1)^k$ is either $1$ or is at least $(q^2 - 1)_2$).
\end{lemma}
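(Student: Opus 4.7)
The plan is to prove Lemma \ref{uni3mod4} by direct application of Lemma \ref{3mod4pmuni}, since a unipotent element is exactly the special case of the more general class of elements considered there in which $\bmu(t+1) = \varnothing$. The whole point of this restatement is to repackage the three disjoint conditions (1), (2), (3) of Lemma \ref{3mod4pmuni} into one unified statement about the $2$-part of the exponents of elementary divisors, so the argument is essentially a careful translation.

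Concretely, I would proceed as follows. Given a unipotent $u \in GU_n(q)$ with $q \equiv 3 \pmod 4$, every elementary divisor of $u$ has the form $(t-1)^k$, so the three cases of Lemma \ref{3mod4pmuni} specialize to:
\begin{enumerate}
\item[(1')] $u$ has an elementary divisor $(t-1)^k$ with $k$ odd, i.e.\ some $(k)_2 = 1$.
\item[(2')] $(q^2-1)_2 \mid k$ for every elementary divisor $(t-1)^k$, i.e.\ every $(k)_2 \geq (q^2-1)_2$.
\item[(3')] The $2^r$ condition of Lemma \ref{3mod4pmuni} applied to $\bmu(t-1)$ alone.
\end{enumerate}
I would then split the verification into three matching cases. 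In the first case, if some $(k)_2 = 1$, the smallest $(k)_2$ over all elementary divisors is $1$, which by the parenthetical in the statement makes the ``number of these'' equal to $0$, and the restated condition holds vacuously; meanwhile (1') gives reality via Proposition \ref{oddpower}. In the second case, if every $(k)_2 \geq (q^2-1)_2$, then there is no $2^r$ satisfying both $2 \leq 2^r < (q^2-1)_2$ and $(k)_2 = 2^r$, so again the parenthetical applies and the restated condition holds vacuously; here reality follows from (2'). In the remaining case, some $(k)_2$ lies strictly between $1$ and $(q^2-1)_2$, and the smallest such $2^r$ is well defined and identical to the $2^r$ appearing in (3'), so the restated condition is literally (3').

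The only subtle bookkeeping point, and the main thing to be careful about, is the parenthetical clause: one must check that when it applies (smallest $(k)_2$ equals $1$ or is $\geq (q^2-1)_2$), Lemma \ref{3mod4pmuni} independently guarantees reality via (1') or (2'), so that declaring the count to be $0$ gives a logically correct ``if and only if.'' Once that edge-case dictionary is verified, the restated lemma follows immediately. There is no new centralizer or determinant computation to do, since all of that work has been carried out already in the proof of Lemma \ref{3mod4pmuni}.
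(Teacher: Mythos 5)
Your proposal is correct and matches the paper's approach: the paper offers no separate argument for Lemma \ref{uni3mod4}, presenting it simply as a restatement of Lemma \ref{3mod4pmuni} specialized to $\bmu(t+1)=\varnothing$, which is exactly the case-by-case translation you carry out. Your explicit check that the parenthetical clause (count declared $0$ when the smallest $2$-part is $1$ or at least $(q^2-1)_2$) lines up with conditions (1) and (2) of Lemma \ref{3mod4pmuni} is the only point requiring care, and you handle it correctly.
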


The conditions implied by Lemma \ref{3mod4pmuni} for the case $n \equiv 2($mod $4)$ are a bit simpler.

\begin{corollary} \label{uni2mod43mod4}  Let $u \in GU_n(q)$ be unipotent and suppose $q \equiv 3($mod $4)$ and $n \equiv 2($mod $4)$.  Then $u$ is real in $SU_n(q)$ if and only $u$ has an elementary divisor of the form $(t-1)^k$ with $k$ odd.
\end{corollary}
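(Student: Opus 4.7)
The plan is to deduce this directly from Lemma \ref{uni3mod4}, with the key work being a simple parity argument that uses the hypothesis $n \equiv 2(\text{mod } 4)$ to force condition (3) of that lemma to fail whenever $u$ has no elementary divisor of odd exponent.

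For the ``if'' direction, I would simply invoke Proposition \ref{oddpower}: if $u$ has any elementary divisor $(t-1)^k$ with $k$ odd, then $u$ is already real in $SU_n(q)$ with no additional work needed.

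For the ``only if'' direction, I would assume that $u$ has no elementary divisor $(t-1)^k$ with $k$ odd and derive that $u$ fails to be real in $SU_n(q)$ by applying Lemma \ref{uni3mod4}. Writing the partition associated to $u$ as $\mu = (2^{m_2}\, 4^{m_4}\cdots)$ (all even parts), I would first observe that because $q \equiv 3(\text{mod } 4)$ we have $(q+1)_2 \geq 4$ and $(q-1)_2 = 2$, so $(q^2-1)_2 \geq 8$. Next, since $n \equiv 2(\text{mod } 4)$ and every part of $\mu$ is even, not every part can be divisible by $4$; hence some part $k$ satisfies $(k)_2 = 2$. Together these imply that the exponent $2^r$ appearing in Lemma \ref{uni3mod4} is exactly $2^r = 2$.

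The remaining task is to count, with parity, the parts of $\mu$ whose $2$-part is $2$, i.e.\ parts of the form $2j$ with $j$ odd. Writing $n/2 = \sum_{k \geq 1} k\, m_{2k}$ and reducing modulo $2$, only the terms with $k$ odd contribute, giving
\[
\frac{n}{2} \equiv \sum_{k \text{ odd}} m_{2k} \pmod{2}.
\]
Since $n \equiv 2(\text{mod } 4)$, the left side is odd, so the number of parts of $\mu$ that are $\equiv 2(\text{mod } 4)$ is odd. This is precisely the failure of condition (3) of Lemma \ref{uni3mod4}, so $u$ is not real in $SU_n(q)$. The only ``obstacle'' is the bookkeeping of ensuring that the case $2^r = 2$ is actually the relevant case under the lemma's inequality $2 \leq 2^r < (q^2-1)_2$, which is handled by the observation $(q^2-1)_2 \geq 8$ above.
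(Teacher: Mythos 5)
Your proposal is correct and follows essentially the same route as the paper's proof: the ``if'' direction via Proposition \ref{oddpower}, and the ``only if'' direction by noting that when all parts of $\mu$ are even and $n \equiv 2 \pmod 4$, the smallest relevant $2$-part is $2^r = 2$ (well below $(q^2-1)_2$), and the number of parts with $2$-part equal to $2$ is forced to be odd, so the evenness condition of Lemma \ref{uni3mod4} fails. Your explicit mod-$2$ computation of $n/2 \equiv \sum_{k \text{ odd}} m_{2k}$ is just a slightly more detailed writing of the parity claim the paper asserts directly.
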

\begin{proof} If $u$ has such an elementary divisor, then we know $u$ is real in $SU_n(q)$.  Suppose $u$ has no such elementary divisor.  Then all elementary divisors of $u$ are of the form $(t-1)^k$ with $k$ even, and there must exist some $k$ with $(k)_2 = 2$, since $n \equiv 2($mod $4)$.  Since $(q^2 - 1)_2 \geq 4$, then we know $(q^2 -1)_2$ does not divide such $k$.  Now, $2$ is the smallest $2$-part of a $k$ such that $(t-1)^k$ is an elementary divisor of $u$.  There must be an odd number of such elementary divisors since $n \equiv 2($mod $4)$.  Since conditions (1), (2), and (3) of Lemma \ref{3mod4pmuni} all fail, it follows $u$ is not real in $SU_n(q)$.
\end{proof}

\section{Classification of real elements} \label{MainThm}

The following is a key result to understand reality in $SU_n(q)$ in the most complicated case.

\begin{lemma} \label{keyjordanlemma} Suppose that $q \equiv 3($mod $4)$ and $n \equiv 0($mod $4)$.  Let $g \in SU_n(q)$ with Jordan decomposition $g = su$.  Then $g$ is real in $SU_n(q)$ if and only if $u$ is real in $SU_n(q)$.
\end{lemma}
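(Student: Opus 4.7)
My plan is to treat each direction of the equivalence separately.

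The forward direction is immediate from uniqueness of the Jordan decomposition: if $y \in SU_n(q)$ reverses $g$, then the two expressions $g^{-1} = s^{-1}u^{-1}$ and $g^{-1} = (ysy^{-1})(yuy^{-1})$ are each products of commuting semisimple and unipotent factors, so by uniqueness they must match term-by-term, forcing $yuy^{-1} = u^{-1}$ and showing $u$ is real in $SU_n(q)$.

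For the backward direction I will assume $u$ is real in $SU_n(q)$, and also, as is implicit throughout this classification (since reality in $SU_n(q)$ requires reality in $GU_n(q)$), that $g$ is real in $GU_n(q)$. If $g$ has any elementary divisor with odd exponent, Proposition \ref{oddpower} delivers the conclusion directly, so I reduce to the case where every elementary divisor of $g$ has even exponent; Proposition \ref{Jdecomp} then forces every part of $\mu_u$ to be even. Next, decompose $g = g^* \oplus g_{t-1} \oplus g_{t+1}$ along the generalized eigenspaces of $s$, where $g^*$ collects the blocks whose elementary divisors are not powers of $t \pm 1$. Lemma \ref{nounipreal} supplies a reversing element of $g^*$ in $SU_{n^*}(q)$, and its proof also shows $4 \mid n^*$. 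Any reverser of $g$ in $GU_n(q)$ must conjugate $s$ to $s^{-1}$; since the $\pm 1$ eigenspaces are self-paired while the other eigenvalues pair off as $\alpha \leftrightarrow \alpha^{-1}$, every such reverser has the block-diagonal form $y = y^* \oplus y_1 \oplus y_2$, with $y_1 \in GU_{|\tau|}(q)$ reversing the unipotent $g_{t-1}$ and $y_2 \in GU_{|\nu|}(q)$ reversing $g_{t+1}$, where $\tau = \bmu(t-1)$ and $\nu = \bmu(t+1)$.

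By Lemma \ref{detsquare} it then suffices to arrange $\det(y^*)\det(y_1)\det(y_2)$ to have odd multiplicative order in $C_{q+1}$. Using Lemma \ref{reverseu} and the explicit parameterization from the proof of Lemma \ref{3mod4pmuni}, the achievable values of $\det(y_1)\det(y_2)$ take the form $(-1)^{(|\tau|+|\nu|)/2} \prod_k \beta_k^{k(m_k(\tau)+m_k(\nu))} \prod_k \delta_k^k$ with $\beta_k^{q+1} = -1$ and $\delta_k \in C_{q+1}$ free; the same formula, applied with $\bmu(t-1) = \mu_u$, governs reversers of the unipotent $u$, and reality of $u$ in $SU_n(q)$ produces an odd-order instance. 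The parity identity
\[
m_k(\mu_u) - (m_k(\tau) + m_k(\nu)) = \sum_{f \ne t \pm 1} \deg(f)\, m_k(\bmu(f)) \equiv 0 \pmod{2}
\]
(each summand is even, since either $\deg(f)$ is even or else $f$ is paired with $\tilde f$ and $\bmu(f) = \bmu(\tilde f)$), combined with $4 \mid n^*$ ensuring $(-1)^{(|\tau|+|\nu|)/2} = (-1)^{n/2}$, shows that the two parameterizations differ only by squares $\beta_k^{2\cdot\text{integer}} \in C_{q+1}^2$; the residual gap is to be closed by letting $\det(y^*)$ vary through $\det C_{GU_{n^*}(q)}(g^*)$, which by the centralizer structure of Section \ref{JdecompCent} contains the needed compensating factor. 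The hard part will be this last 2-adic accounting: pinning down precisely how $\det C_{GU_{n^*}(q)}(g^*)$ sits inside $C_{q+1}$ and verifying it is large enough to absorb every residual square discrepancy, which requires unpacking the centralizer structure of the non-$(t\pm 1)$ blocks.
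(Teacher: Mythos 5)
Your forward direction is exactly the paper's argument, and your setup for the converse (the decomposition $g = g^* \oplus g_{t-1} \oplus g_{t+1}$, the use of Lemma \ref{nounipreal} and the fact that $4 \mid n^*$, the reduction via Lemma \ref{detsquare} to producing a reverser of odd-order determinant, and the determinant parameterization from Lemma \ref{3mod4pmuni}) also matches the paper. But the proof is not complete: the step you defer as ``the hard part'' --- showing that $\det C_{GU_{n^*}(q)}(g^*)$ is large enough to correct the determinant of a reverser of $g_{\pm}$ --- is the actual content of the lemma, not a routine verification. Your parity identity is correct but does not reduce the gap to something negligible: since $q \equiv 3 \pmod 4$ we have $(q+1)_2 \geq 4$, so elements of $C_{q+1}^2$ (and even powers $\beta_k^{2j}$) can still have even multiplicative order, and the genuinely problematic term is not a ``square discrepancy'' at all. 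It is the factor $\alpha_1^{(k_1)_2}$ in $\det(h_{\pm})$, where $(k_1)_2$ is the minimal $2$-part among exponents of elementary divisors of $g_{\pm}$ and $\alpha_1$ has order $(q+1)_2$; the freedom $\delta_k^k$ available to reversers of the unipotent $u$ for parts $k$ of $\mu_u$ with $(k)_2 = 2^r$ has no counterpart among reversers of $g_{\pm}$ when those parts arise only from polynomials $f \neq t \pm 1$.

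Closing this requires precisely what the paper does: first a case analysis (if $u$ has an odd-exponent divisor, or if $(q^2-1)_2$ divides every exponent, or if $g$ itself has a divisor $(t\pm1)^k$ with $(k)_2 = 2^r$, then Proposition \ref{oddpower} or Lemma \ref{3mod4pmuni} applies directly to $g_{\pm}$); and then, in the remaining case, an explicit construction of $c_{f_1} \in C_{GU_{n_{f_1}}(q)}(g_{f_1})$ for the polynomial $f_1 \neq t\pm1$ carrying an exponent with $2$-part $2^r < (k_1)_2$. That construction uses the identification of $C_{GU_{n_{f_1}}(q)}(s_1)$ with $GU_{|\bmu(f_1)|}(q^d)$ or $GL_{|\bmu(f_1)|}(q^d)$, the surjectivity of the norm maps $\mathrm{N}_{q^{2d}/q^2}$ and $\mathrm{N}_{q^d/q^2}$ to realize a prescribed determinant $\delta^k$ over $\F_{q^2}$, and the divisibility $(k)_2 = 2^r \mid (k_1)_2$ to see that $\delta^k$ can equal $\alpha_1^{-(k_1)_2}$. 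None of this is present in your write-up, so as it stands the argument has a genuine gap at its central point.
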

\begin{proof} First, if $g$ is real in $SU_n(q)$ and is reversed by $h$, then $g^{-1} = s^{-1} u^{-1} = (h s h^{-1})(h u h^{-1})$, and $u^{-1} = huh^{-1}$ by uniqueness of the Jordan decomposition, so $u$ is real in $SU_n(q)$.  Therefore we now assume that $u$ is real in $SU_n(q)$, and we prove that $g$ is real in $SU_n(q)$.

We may suppose that $g$ is in the form $g = \oplus g_{f^*}$, as in Section \ref{initialobs} before Lemma \ref{nounipreal}.  Write 
$$g^* = \oplus_{f \neq t \pm 1} g_{f^*}, \quad \text{ so } \quad g = g^* \oplus g_{t + 1} \oplus g_{t - 1},$$
where, as in Lemma \ref{nounipreal}, $g^* \in SU_{n^*}(q)$ with $n^* = \sum_{f \neq t \pm 1} n_{f^*}$.  By Lemma \ref{nounipreal}, $g^*$ is real in $SU_{n^*}(q)$.  Write $g_{\pm} = g_{t+1} \oplus g_{t-1}$ and $n_{\pm} = n_{t+1} + n_{t-1}$, so $g_{\pm} \in SU_{n_{\pm}}(q)$.  Since $g^*$ is real in $SU_{n^*}(q)$, we may assume that $g_{\pm}$ is not real in $SU_{n_{\pm}}(q)$, as otherwise we have $g$ is real in $SU_n(q)$.

First note that if $u$ has an elementary divisor of the form $(t - 1)^k$ with $k$ odd, then by Proposition \ref{Jdecomp}, $g$ must have an elementary divisor of the form $f(t)^k$ with $k$ odd, and so by Proposition \ref{oddpower} $g$ is real in $SU_n(q)$.  Also, if $(q^2 - 1)_2$ divides $k$ for all elementary divisors of the form $(t - 1)^k$ of $u$, then by Proposition \ref{Jdecomp}, it follows that $(q^2 - 1)_2$ divides $k$ for all elementary divisors $f(t)^k$ of $g$.  In particular, $(q^2 - 1)_2$ divides $k$ for all elementary divisors $(t \pm 1)^k$ of $g_{t+1} \oplus g_{t-1}$.  By Lemma \ref{3mod4pmuni}, we have $g_{\pm} = g_{t+1} \oplus g_{t-1}$ is real in $SU_{n_{\pm}}(q)$, and since $g^*$ is real in $SU_{n^*}(q)$, it follows that $g = g^* \oplus g_{\pm}$ is real in $SU_n(q)$.  So now let $2^r$ be the smallest power of $2$ such that $2 \leq 2^r < (q^2 -1)_2$ and $(k)_2 = 2^r$ for some elementary divisor $(t-1)^k$ of $u$.  By Lemma \ref{uni3mod4}, we may now assume that $u$ has a positive even number of elementary divisors of the form $(t-1)^e$ satisfying $(e)_2 = 2^r$.  

By Proposition \ref{Jdecomp}, $2^r$ is also the smallest power of $2$ such that $2 \leq 2^r < (q^2 - 1)_2$ and $(k)_2 = 2^r$ for some elementary divisor $f(t)^k$ of $g$.  Suppose that $g$ has an elementary divisor of the form $(t \pm 1)^k$ such that $(k)_2 = 2^r$.  As noted in Section \ref{initialobs}, if $f(t)$ is any elementary divisor of $g$ of the form $f(t)^e$ such that $(e)_2 = 2^r$ and $f(t) \neq t \pm 1$, then $\mathrm{deg}(f)$ is even if $f = \tilde{f}$,  while if $f \neq \tilde{f}$, then $\tilde{f}(t)^e$ is an elementary divisor of $g$ with the same multiplicity as $f(t)^e$.   Since the total number of elementary divisors of $u$ of the form $(t -1)^k$ with $(k)_2 = 2^r$ is even, Lemma \ref{Jdecomp} then implies that the total number of elementary divisors of $g$ of the form $(t \pm 1)^k$ with $(k)_2 = 2^r$ is also even.  But now $g_{\pm}$ is real in $SU_{n_{\pm}}(q)$ by Lemma \ref{3mod4pmuni}, and then $g$ is real in $SU_n(q)$.  Thus, we can assume that $g$ has no elementary divisor of the form $(t \pm 1)^k$ with $(k)_2 = 2^r$, but there is some elementary divisor of $g$ of the form $f(t)^k$ with $(k)_2 = 2^r$ and $f(t) \neq t \pm 1$.

Let $(k_1)_2$ be the smallest $2$-part of an exponent of an elementary divisor of $g_{\pm}$.  From the previous paragraph, we are assuming $2^r < (k_1)_2$.  Since we are assuming that $g_{\pm}$ is not real in $SU_{n_{\pm}}(q)$, then the number of elementary divisors of $g_{\pm}$ with exponent having $2$-part equal to $(k_1)_2$ is odd, by Lemma \ref{3mod4pmuni}.  Note also that as in the proof of Lemma \ref{nounipreal}, we have $4|n^*$, and since $4|n$ and $n = n^* + n_{\pm}$ then $4|n_{\pm}$.  We know that $g_{\pm}$ is real in $GU_{n_{\pm}}(q)$, so let $h_{\pm} \in GU_{n_{\pm}}(q)$ be any reversing element of $g_{\pm}$.  If $g$ is in the $GU_n(q)$-class corresponding to $\bmu$, let $\bmu(t-1) = \tau$ and $\bmu(t+1) = \nu$.  Let $\alpha \in C_{q+1}$ have multiplicative order $|\alpha| = (q+1)_2$.  Using exactly the notation as in the proof of Lemma \ref{3mod4pmuni}, we know from Equation (\ref{revdet}) that the determinant of $h_{\pm}$ must be of the form
$$ \mathrm{det}(h_{\pm}) = (-1)^{n_{\pm}/2} \alpha^{(k_1)_2(x+y)} \gamma,$$
where $\gamma$ has odd multiplicative order.  Since we are assuming that $g_{\pm}$ is not real in $SU_{n_{\pm}}(q)$, then we know from the proof of Lemma \ref{3mod4pmuni} that $x+y$ is odd.  Since $|\alpha| = (q+1)_2$, then $\alpha_1 = \alpha^{x+y}$ also has multiplicative order $|\alpha_1| = (q+1)_2$.  Since $4|n_{\pm}$, then we have
$$ \mathrm{det}(h_{\pm}) = \alpha_1^{(k_1)_2} \gamma.$$

Now let $h^* \in SU_{n^*}(q)$ be a reversing element for $g^*$.  If $c \in C_{GU_{n^*}(q)}(g^*)$, then $h^*c$ is a reversing element for $g^*$ in $GU_{n^*}(q)$.  Then $h^*c \oplus h_{\pm}$ is a reversing element in $GU_n(q)$ for $g = g^* \oplus g_{\pm}$, and 
$$ \mathrm{det}(h^*c \oplus h_{\pm}) = \mathrm{det}(h^*c) \mathrm{det}(h_{\pm}) = \mathrm{det}(c) \alpha_1^{(k_1)_2} \gamma.$$
Since $\gamma$ has odd multiplicative order, if we can find $c \in C_{GU_{n^*}(q)}(g^*)$ such that $\mathrm{det}(c) \alpha_1^{(k_1)_2}$ has odd multiplicative order in $C_{q+1}$, then by Lemma \ref{detsquare} we have $g$ is real in $SU_n(q)$.

It follows from the work of Wall \cite[Sec. 2.6, Case (A)]{Wa62} (see also \cite[Proposition 2.1]{GSV14}), that for $g^* = \oplus_{f \neq t \pm 1} g_f$, we have
$$ C_{GU_{n^*}(q)}(g^*) = \bigoplus_{f \neq t \pm 1} C_{GU_{n_f}(q)}(g_f).$$
So, for any $c \in C_{GU_{n^*}(q)}(g^*)$, we write $c = \oplus_{f \neq t \pm 1} c_f$, where $c_f \in C_{GU_{n_f}(q)}(g_f)$.  Now let $f_1$ be the polynomial such that $f_1(t) \neq t \pm 1$ and $g$ has as an elementary divisor $f_1(t)^k$ with $(k)_2 = 2^r < (k_1)_2$.  

Recall that $(k)_{2'}$ is invertible modulo $(q+1)_2$ and that $(k)_2$ divides $(k_1)_2$, so that we may define $\delta=\alpha_1^{-(k_1)_2/k}\in C_{q+1}$.  We will take $c$ such that $c_f = 1$ when $f \neq f_1$ and show that we may find $c_{f_1} \in C_{GU_{n_{f_1}}(q)}(g_{f_1})$ such that $\det(c_{f_1})=\delta^k$, so that $\mathrm{det}(c) \alpha_1^{(k_1)_2} = \mathrm{det}(c_{f_1}) \alpha_1^{(k_1)_2}=\delta^k\alpha_1^{(k_1)_2}$ has odd multiplicative order in $C_{q+1}$.

Write $g_{f_1}=s_1u_1$ in terms of its Jordan decomposition.  In the following, we make use of the structure of the centralizer of semisimple elements, details of which can be found in \cite[Proposition (1A)]{FoSr82}, \cite[Part (B1), Proof of Theorem 4.1]{TiZa96}, and \cite[Theorem 2.4.1]{SF13}, for example.  

Write $d$ for the degree of $f_1$ and $C(s_1)$ for the centralizer $C_{GU_{n_{f_1}}(q)}(s_1)$.  From Lemma \ref{Jdecomp}, $s_1$ is in the class corresponding to $(1^{|\bmu(f_1)|})$.  Then $C(s_1)\cong GU_{|\bmu(f_1)|}(q^d)$ if $f_1=f_1^\checkmark$ and $C(s_1)\cong GL_{|\bmu(f_1)|}(q^d)$ if $f_1= f_2 f_2^\checkmark$ for some irreducible $f_2$ with $f_2\neq f_2^\checkmark$, where $d=2\deg(f_2)$.  

As in  \cite[discussion before Lemma 5.2]{GiSi11}, in identifying $C(s_1)$ with $GU_{|\bmu(f_1)|}(q^d)$, respectively $GL_{|\bmu(f_1)|}(q^d)$, we will need to distinguish between the determinant of an element of $C(s_1)$ when viewed over $\F_{q^{2d}}$, respectively $\F_{q^d}$, and when viewed over $\F_{q^2}$.  In the case $f_1=f_1^\checkmark$, if $z\in C(s_1)$  has determinant $\det_{q^{2d}}(z)$ as an element of $GU_{|\bmu(f_1)|}(q^d)$, then $z$ has determinant $\det_{q^2}(z)=\mathrm{N}_{q^{2d}/q^2}(\det_{q^{2d}}(z))$ as an element of $GU_{n_{f_1}}(q)$.   In the case $f_1=f_2f_2^\checkmark$, if $z\in C(s_1)$ has determinant $\det_{q^d}(z)$ as an element of $GL_{|\bmu(f_1)|}(q^d)$, then $z$ has determinant $\det_{q^2}(z)=\mathrm{N}_{q^{d}/q^2}(\det_{q^d}(z)^{1-q})$ as an element of $GU_{n_{f_1}}(q)$.   Here $\mathrm{N}_{q^{2d}/q^2}$ and $\mathrm{N}_{q^{d}/q^2}$ are the standard norm maps for finite fields. 

Now, recalling that the norm maps are surjective, we may find $\tilde{\delta}$ such that in the case $f_1=f_1^\checkmark$, $\tilde{\delta}\in C_{q^d+1}\leq \F_{q^{2d}}^\times$ and $\mathrm{N}_{q^{2d}/q^2}(\tilde{\delta})=\delta$,  and in the case $f_1=f_2f_2^\checkmark$, $\tilde{\delta}\in \F_{q^d}^\times$ and $\mathrm{N}_{q^d/q^2}(\tilde{\delta})^{1-q}=\delta$.

Note that an element is a member of $C_{GU_{n_{f_1}}(q)}(g_{f_1})$ if and only if it is a member of $C(s_1)$ which commutes with $u_1$.  Write $\bmu(f_1)=\rho=(\rho_1,\rho_2,\ldots)$ such that $\rho_\ell = k$, so $(\rho_\ell)_2=2^r$.  Then $u_1$ is conjugate in $C(s_1)$ (identified with $GU_{|\bmu(f_1)|}(q^d)$, respectively $GL_{|\bmu(f_1)|}(q^d)$) to an element $\widetilde{u_1}$ in block diagonal form corresponding to its elementary divisors $(t-1)^{\rho_i}$.  The element
\[\widetilde{c_{f_1}} = \bigoplus_{i<\ell} I_{\rho_i} \oplus \tilde{\delta}I_{\rho_\ell}\oplus\bigoplus_{i>\ell} I_{\rho_{i}},\] where $I_{\rho_i}$ is the $\rho_i$-by-$\rho_i$ identity, is then an element of $GU_{|\bmu(f_1)|}(q^d)$, respectively $GL_{|\bmu(f_1)|}(q^d)$, which commutes with $\widetilde{u_1}$.  Hence there is a conjugate in $C(s_1)$, say  $c_{f_1}$, of $\widetilde{c_{f_1}}$ which commutes with $u_1$, and we see that $\det_{q^{2d}}(c_{f_1})=\widetilde{\delta}^{\rho_\ell}$, respectively $\det_{q^{d}}(c_{f_1})=\tilde{\delta}^{\rho_\ell}$, so that $\det_{q^2}(c_{f_1})=\delta^{\rho_\ell}$ by our definition of $\tilde{\delta}$.  Hence $c_{f_1} \in C_{GU_{n_{f_1}}(q)}(g_{f_1})$ and $\det_{q^2}(c_{f_1})=\delta^k$ as desired.
\end{proof}

Finally, we arrive at our main results.

\begin{theorem} \label{MainReal}
Suppose that $g \in SU_n(q)$ and that $g$ is real as an element of $GU_n(q)$.
\begin{enumerate}
\item[(1)] If $q \not\equiv 3($mod $4)$ or $n$ is odd, then $g$ is real in $SU_n(q)$.
\item[(2)] If $q \equiv 3($mod $4)$ and $n \equiv 2($mod $4)$, then $g$ is real in $SU_n(q)$ if and only if $g$ has an elementary divisor of the form $f(t)^k$ with $k$ odd.
\item[(3)] If $q \equiv 3($mod $4)$ and $n \equiv 0($mod $4)$, then $g$ is real in $SU_n(q)$ if and only if the following holds:

\noindent Write $2^r$ for the smallest power of $2$ such that $2 \leq 2^r < (q^2 - 1)_2$ and $(k)_2 = 2^r$ for some elementary divisor $f(t)^k$ of $g$.  Then $g$ has an even number of elementary divisors of the form $(t \pm 1)^{e}$ such that $(e)_2 = 2^r$ (where the number of these is $0$ if the smallest power $2^r$ dividing the exponent of some elementary divisor is either $1$ or is at least $(q^2 - 1)_2$).
\end{enumerate}
\end{theorem}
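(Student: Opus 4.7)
The plan is to assemble the theorem from results already in hand, using the Jordan decomposition $g=su$ throughout; since $u$ is unipotent we have $u\in SU_n(q)$, and by uniqueness of the Jordan decomposition any element reversing $g$ also reverses $u$.

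For part (1), the case of $q$ even is Proposition \ref{qeven}. For $q$ odd and $n$ odd, the type $\nu$ of the $GU_n(q)$-class of $g$ satisfies $|\nu|=n$, which is odd, so $\nu$ has an odd part; by the definition of type this forces $g$ to have an elementary divisor $f(t)^k$ with $k$ odd, and Proposition \ref{oddpower} applies. For $q\equiv 1\pmod 4$ with $n$ even, we may assume $g$ has no elementary divisor of odd exponent (else Proposition \ref{oddpower} again). Decompose $g=g^*\oplus g_{t-1}\oplus g_{t+1}$ as in the discussion preceding Lemma \ref{nounipreal}; that lemma provides a reversing element of $g^*$ in $SU_{n^*}(q)$. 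Both $g_{t-1}$ and $-g_{t+1}$ are unipotent, hence by Lemma \ref{uni1mod4} each is real in its special unitary group, and since $h(-v)h^{-1}=(-v)^{-1}$ is equivalent to $hvh^{-1}=v^{-1}$, a reversing element of $-g_{t+1}$ also reverses $g_{t+1}$. The block-diagonal sum of these three reversing elements lies in $SU_n(q)$ and reverses $g$.

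For part (2), the forward direction is Proposition \ref{oddpower}. Conversely, suppose $g$ is real in $SU_n(q)$ and has no elementary divisor $f(t)^k$ with $k$ odd. By Proposition \ref{Jdecomp}, $u$ then has no elementary divisor $(t-1)^k$ with $k$ odd, and by the Jordan uniqueness remark $u$ is real in $SU_n(q)$, contradicting Corollary \ref{uni2mod43mod4}.

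For part (3), Lemma \ref{keyjordanlemma} reduces the question to whether $u$ is real in $SU_n(q)$, which is governed by Lemma \ref{uni3mod4}. It remains to verify that the condition in (3) on $g$ is equivalent to the condition of Lemma \ref{uni3mod4} on $u$. By Proposition \ref{Jdecomp} the set of exponents appearing in the elementary divisors $f(t)^k$ of $g$ coincides with the set of exponents appearing in the $(t-1)^k$ elementary divisors of $u$, so the defining $2^r$ is the same in both. For the parity count, every $f\in\U$ with $f\neq t\pm 1$ either satisfies $f=\tilde f$ with $\deg(f)$ even, or is paired with $\tilde f\neq f$ carrying the same partition $\bmu(f)=\bmu(\tilde f)$ since $g$ is real; either way the contribution to $m_e(\mu_u)=\sum_f\deg(f)m_e(\bmu(f))$ from $f\neq t\pm 1$ is even. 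Hence the parity of the number of $(t-1)^e$ elementary divisors of $u$ with $(e)_2=2^r$ equals the parity of the number of $(t\pm 1)^e$ elementary divisors of $g$ with $(e)_2=2^r$. The principal technical content of the proof lives in Lemma \ref{keyjordanlemma} and the unipotent classifications of Section \ref{RealUnis}; the theorem itself is a bookkeeping assembly, whose only mild subtlety is the parity equivalence in part (3) just described.
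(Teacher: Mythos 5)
Your proposal is correct and follows essentially the same route as the paper's own proof: reduce to $q$ odd via Proposition \ref{qeven}, handle odd $n$ by forcing an odd-exponent elementary divisor and invoking Proposition \ref{oddpower}, assemble the $q\equiv 1\pmod 4$ case from Lemmas \ref{nounipreal} and \ref{uni1mod4}, settle part (2) by the Jordan-decomposition uniqueness argument against Corollary \ref{uni2mod43mod4}, and deduce part (3) from Lemmas \ref{keyjordanlemma} and \ref{uni3mod4} via Proposition \ref{Jdecomp}. Your explicit parity computation showing that the $f\neq t\pm 1$ contributions to $m_e(\mu_u)$ are even is exactly the point the paper compresses into ``by applying Proposition \ref{Jdecomp},'' and is a welcome elaboration rather than a deviation.
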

\begin{proof} If $q$ is even, then we know $g$ is real in $SU_n(q)$ by Proposition \ref{qeven}, so we assume $q$ is odd.  Replace $g$ by an element in its conjugacy class of the form $\oplus g_{f^*}$, as in Section \ref{initialobs} before Lemma \ref{nounipreal}.  As mentioned there, we have $n = \sum n_{f^*}$, where $n_{f^*}$ is even when $f(t) \neq t \pm 1$.  Therefore, if $n$ is odd, then we must have either $n_{t - 1}$ or $n_{t +1}$ is odd, so that $g$ must have an elementary divisor of the form $(t \pm 1)^k$ with $k$ odd.  By Proposition \ref{oddpower}, it follows that $g$ is real in $SU_n(q)$.

If $q \equiv 1($mod $4)$, we have $g^* = \oplus_{f \neq t \pm 1} g_{f^*}$ is real in $SU_{n^*}(q)$ by Lemma \ref{nounipreal}.  By Lemma \ref{uni1mod4} we have $g_{t-1}$ is real in $SU_{n_{t-1}}(q)$.  Since $-g_{t+1}$ is unipotent, then it is real, and so $g_{t+1}$ is real in $SU_{n_{t+1}}(q)$.  Thus $\oplus g_{f^*}$ is real in $SU_n(q)$, and so $g$ is as well.

Now assume that $q \equiv 3($mod $4)$, and suppose first that $n \equiv 2($mod $4)$.  If $g$ has an elementary divisor of the form $f(t)^k$ with $k$ odd, then $g$ is real in $SU_n(q)$ by Proposition \ref{oddpower}.  Suppose $g$ has no such elementary divisor, and write $g = su$ in terms of its Jordan decomposition.  If $g$ has no elementary divisor of the form $f(t)^k$ with $k$ odd, then it follows from Proposition \ref{Jdecomp} that $u$ has no elementary divisors of the form $(t - 1)^k$ with $k$ odd.  By Corollary \ref{uni2mod43mod4}, it follows that $u$ is not real in $SU_n(q)$.  But if $g$ is then real in $SU_n(q)$ with reversing element $h$, we would have
$$ hgh^{-1} = g^{-1} = s^{-1} u^{-1} = (hsh^{-1})(huh^{-1}), \quad \text{and so} \quad u^{-1} = huh^{-1}$$
by the uniqueness of the Jordan decomposition, a contradiction.  Thus $g$ is not real in $SU_n(q)$.

Finally, assume that $q \equiv 3($mod $4)$ and $4|n$.  Write $g = su$ in its Jordan decomposition.  The main point is that the condition in (3) holds if and only if the conditions in Lemma \ref{uni3mod4} hold, by applying Proposition \ref{Jdecomp}.  Statement (3) then follows immediately from Lemma \ref{keyjordanlemma}.
\end{proof}

\section{Strongly real elements and special orthogonal groups} \label{strongreal}

We finally address the question of strong reality in $SU_n(q)$.  Just as embedded orthogonal subgroups $O^{\pm}_n(q)$ play a key role in the strong reality of $GU_n(q)$, the embedded subgroups $SO^{\pm}_n(q)$ are key in the strong reality of $SU_n(q)$.

\begin{theorem} \label{SUstrong}
Let $g \in SU_n(q)$.  
\begin{enumerate}
\item[(1)] If $n \not\equiv 2($mod $4)$ or $q$ is even, then $g$ is strongly real in $SU_n(q)$ if and only if $g$ is strongly real in $GU_n(q)$.
\item[(2)] If $n \equiv 2($mod $4)$ and $q$ is odd, then $g$ is strongly real in $SU_n(q)$ if and only if $g$ is strongly real in $GU_n(q)$ and $g$ has some elementary divisor of the form $(t \pm 1)^k$ with $k$ odd.
\end{enumerate}
\end{theorem}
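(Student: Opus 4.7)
The plan is to reduce strong reality in $SU_n(q)$ to strong reality in $GU_n(q)$ via Theorem \ref{gustrong}, which places any strongly real element of $GU_n(q)$ inside an embedded orthogonal subgroup $O^\pm_n(q)\subset GU_n(q)$, and then to track the determinant of the reversing involution using Lemma \ref{orthogdet}. The forward implication—that strong reality in $SU_n(q)$ implies strong reality in $GU_n(q)$—is immediate from the inclusion $SU_n(q)\subset GU_n(q)$.

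For $q$ even in (1), any involution $h\in GU_n(q)$ satisfies $\mathrm{det}(h)\in C_{q+1}$ with $\mathrm{det}(h)^2=1$, and since $|C_{q+1}|=q+1$ is odd, necessarily $\mathrm{det}(h)=1$ and $h\in SU_n(q)$ automatically. For $q$ odd, Theorem \ref{gustrong} realizes $g$ inside an embedded $O^\pm_n(q)\subset GU_n(q)$, and we seek a reversing involution lying in $SO^\pm_n(q)\subset SU_n(q)$. When $g$ has an elementary divisor $(t\pm 1)^k$ with $k$ odd, Lemma \ref{orthogdet}(1) produces two involutions $h,h'\in O^\pm_n(q)$ with $\mathrm{det}(h)=-\mathrm{det}(h')$, so one has determinant $1$. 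When $g$ has no such divisor, Lemma \ref{orthogdet}(2) forces $n$ even and yields a reversing involution of determinant $(-1)^{n/2}$, which equals $1$ exactly when $4\mid n$. Since $n$ odd falls under the first scenario (the second requires $n$ even), these two possibilities together handle (1) and also furnish the existence direction of (2).

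The principal obstacle is the converse of (2): assuming $q$ odd, $n\equiv 2\pmod 4$, and $g$ strongly real in $GU_n(q)$ with no elementary divisor $(t\pm 1)^k$ of odd $k$, one must show that every reversing involution of $g$ in $GU_n(q)$ has determinant $-1$. My plan is to fix one reversing involution $h_0\in O^\pm_n(q)$ from the embedded orthogonal subgroup supplied by Theorem \ref{gustrong}, for which $\mathrm{det}(h_0)=(-1)^{n/2}=-1$ by Lemma \ref{orthogdet}(2). Any other reversing involution then has the form $h=h_0c$ for $c\in C_{GU_n(q)}(g)$ satisfying the involution constraint $h_0ch_0^{-1}=c^{-1}$, which in turn forces $\mathrm{det}(c)^2=1$, so $\mathrm{det}(c)\in\{\pm 1\}$. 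The heart of the argument is then to show $\mathrm{det}(c)=1$ for every admissible $c$. I would carry this out by decomposing $c$ along the $(t-1)$-primary, $(t+1)$-primary, and remaining parts of $V$ as in the discussion preceding Lemma \ref{nounipreal}, noting that by Theorem \ref{gustrong} and the hypothesis each $(t\pm 1)^{2k}$ has even multiplicity so that $4\mid n_+$, $4\mid n_-$, and $n^*\equiv 2\pmod 4$, then applying Lemma \ref{centdetsquare} to constrain $\mathrm{det}$ on the unipotent restrictions, and handling the remaining components via the orthogonal-embedding analysis used in the proof of Lemma \ref{nounipreal}. The main technical challenge is organizing this determinant accounting uniformly across $q\equiv 1\pmod 4$ and $q\equiv 3\pmod 4$, since $-1$ is a square in $C_{q+1}$ only in the latter case, and ensuring that the involution constraint on $c$ eliminates the remaining sign ambiguity.
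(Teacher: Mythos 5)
Your treatment of the forward implication, of the $q$ even case, and of all the existence directions coincides with the paper's: Theorem \ref{gustrong} places $g$ in an embedded $O^{\pm}_n(q)$, and Lemma \ref{orthogdet} supplies a reversing involution of determinant $1$ when $n$ is odd, when $4\mid n$, or when some elementary divisor $(t\pm1)^k$ with $k$ odd is present. The divergence, and the gap, is in the converse of part (2).

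For that converse the paper uses a reduction you did not consider: it views $g$ as an element of $SL_n(q^2)$ via the embedding $SU_n(q)\subset SL_n(q^2)$ and invokes Theorem \ref{slnstrong}(ii) of Gill and Singh. Since $n\equiv 2\pmod 4$, $q^2$ is odd, and $g$ has no elementary divisor $(t\pm1)^k$ with $k$ odd, $g$ is not strongly real in $SL_n(q^2)$, hence not in any subgroup, in particular not in $SU_n(q)$. This disposes of the converse in one line. Your alternative --- write every reversing involution as $h_0c$ with $c\in C_{GU_n(q)}(g)$ subject to $h_0ch_0^{-1}=c^{-1}$, and show $\det(c)=1$ --- stalls exactly at the step you defer. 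The tools you cite cannot close it: the second part of Lemma \ref{centdetsquare} only says that determinants of elements centralizing the unipotent pieces are squares in $C_{q+1}$, and when $q\equiv 3\pmod 4$ the element $-1$ \emph{is} a square in $C_{q+1}$, so squareness does not exclude $\det(c)=-1$; indeed the first part of that same lemma shows the full centralizer genuinely contains elements of determinant $-1$ in that case. Everything therefore rests on the extra twisted-involution constraint $h_0ch_0^{-1}=c^{-1}$, which you name as ``the main technical challenge'' without indicating how it eliminates the sign; determining which such $c$ can occur and what their determinants are is essentially the content of Gill and Singh's proof of Theorem \ref{slnstrong} and is not routine determinant bookkeeping. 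Without either that analysis or the $SL_n(q^2)$ reduction that replaces it, the converse of (2) remains unproved.
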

\begin{proof} If $g$ is strongly real in $SU_n(q)$, then it is in $GU_n(q)$.  So we assume $g$ is strongly real in $GU_n(q)$ throughout.  Let $h \in GU_n(q)$ be a reversing element of $g$ with $h^2 = 1$, so that $\mathrm{det}(h) = \pm 1$.  If $q$ is even, then necessarily $\mathrm{det}(h) = 1$ and $h \in SU_n(q)$, so $g$ is strongly real in $SU_n(q)$.  So we now assume that $q$ is odd.

By Theorem \ref{gustrong}, if $q$ is odd and $g \in GU_n(q)$ is strongly real, then every elementary divisor of $g$ of the form $(t \pm 1)^{2k}$ appears with even multiplicity, and in particular $g$ is the element of some embedded orthogonal group $O^{\pm}_n(q)$.  If $n$ is odd or divisible by $4$, it follows from Lemma \ref{orthogdet} that $g$ is reversed by an involution from $SO^{\pm}_n(q)$.  In particular, $g$ is reversed by an involution from $SU_n(q)$, and so is strongly real in $SU_n(q)$.  If $n \equiv 2($mod $4)$, and if $g$ has some elementary divisor of the form $(t \pm 1)^k$ with $k$ odd, then it also follows from Lemma \ref{orthogdet} that $g$ is strongly real in $SU_n(q)$.  If $g$ has no such elementary divisor when $n \equiv 2($mod $4)$, then consider $g$ as an element of $SL_n(q^2)$.  It follows from Theorem \ref{slnstrong} that $g$ is not strongly real in $SL_n(q^2)$, so it cannot be strongly real as an element of the subgroup $SU_n(q)$.
\end{proof}

As we mentioned after Lemma \ref{orthogdet}, it was noticed by Gow that if $q$ is odd and $n$ is odd or $4|n$, then every element of $SO_n^{\pm}(q)$ is strongly real.  We can complete the classification of real and strongly real classes of $SO_n^{\pm}(q)$ for $q$ odd as follows.

\begin{theorem} \label{SO2mod4}
Let $q$ be odd, $n \equiv 2($mod $4)$, and $g \in SO^{\pm}_n(q)$.  Then $g$ is real if and only if $g$ is strongly real, and $g$ is real if and only it has an elementary divisor of the form $(t \pm 1)^k$ with $k$ odd.
\end{theorem}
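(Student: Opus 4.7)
The implication ``odd elementary divisor $\Rightarrow$ strongly real'' is a direct consequence of Lemma \ref{orthogdet}(1): among the two reversing involutions $h,h'\in O_n^{\pm}(q)$ produced there, one has determinant $+1$ and therefore lies in $SO_n^{\pm}(q)$. This will also yield ``real $\Rightarrow$ strongly real'' once the converse is settled, so the substantive task is to prove that if $g\in SO_n^{\pm}(q)$ has no elementary divisor of the form $(t\pm 1)^k$ with $k$ odd, then $g$ is not real in $SO_n^{\pm}(q)$.

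Under that hypothesis Lemma \ref{orthogdet}(2) supplies a reversing involution $h\in O_n^{\pm}(q)$ with $\det(h)=(-1)^{n/2}=-1$, using $n\equiv 2\pmod 4$. Every reversing element of $g$ in $O_n^{\pm}(q)$ then has the form $h'=hc$ for some $c\in C:=C_{O_n^{\pm}(q)}(g)$, so $\det(h')=-\det(c)$, and $g$ is real in $SO_n^{\pm}(q)$ if and only if some $c\in C$ satisfies $\det(c)=-1$. My plan is to show instead that $\det(c)=1$ for \emph{every} $c\in C$, ruling this out.

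To establish the determinant claim I would decompose $V=\F_q^n$ orthogonally as $V^+\oplus V^-\oplus V^0$, where $g$ restricted to $V^{\pm}$ has only elementary divisors of the form $(t\mp 1)^k$ and $g$ restricted to $V^0$ has no such elementary divisors; the centralizer then factors as $C=C^+\times C^-\times C^0$, and it suffices to check each factor consists only of determinant-$1$ elements. On $V^{\pm}$, $g$ is $\pm$ (a unipotent element) whose Jordan partition has only even parts (by hypothesis), so by the standard description of unipotent centralizers in orthogonal groups (\cite{Wa62, LiSe12}) the reductive quotient is a product of symplectic groups — even parts contribute $Sp$-factors, odd parts would contribute $O$-factors but are absent — and the unipotent radical also consists of determinant-$1$ matrices. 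On $V^0$, centralizers are controlled by the centralizer of the semisimple part of $g|_{V^0}$: a non-self-reciprocal pair $\{f,\tilde f\}$ contributes a $GL$-type factor embedded as $(x,x^{*-1})$ on a hyperbolic summand, which manifestly has $\F_q$-determinant $1$, while a self-reciprocal $f\neq t\pm 1$ of degree $2d$ contributes a $GU_m(q^d)$-type factor whose $\F_{q^{2d}}$-determinants are $(q^d+1)$-th roots of unity; the identity $N_{\F_{q^{2d}}/\F_q}(\mu)=\mu^{(q^{2d}-1)/(q-1)}=1$ for such $\mu$ then gives $\F_q$-determinant $1$.

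The main obstacle is the $V^0$ analysis when $g$ has nontrivial unipotent contribution at polynomials $f\neq t\pm 1$: the full centralizer of $g|_{V^0}$ is the centralizer of a unipotent element inside the product of $GL$'s and $GU$'s that centralize the semisimple part, and one must track the embeddings carefully enough to confirm that the extra unipotent constraints do not enlarge the determinant image beyond $\{1\}$. This is the orthogonal-group analogue of the ``important example'' computation at the end of Section \ref{JdecompCent}, and I expect it to require invoking Wall's centralizer description rather than a purely hands-on calculation. Once this is in place, the combined conclusion forces every reversing element of $g$ in $O_n^{\pm}(q)$ to have determinant $-1$, so $g$ is not real in $SO_n^{\pm}(q)$; combined with Lemma \ref{orthogdet}(1) this delivers the ``real iff strongly real'' half of the theorem at no extra cost.
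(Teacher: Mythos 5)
Your proposal is correct, and its logical skeleton matches the paper's: the odd elementary divisor produces a determinant-one reversing involution via Lemma \ref{orthogdet}(1), and in its absence Lemma \ref{orthogdet}(2) gives a reversing involution of determinant $(-1)^{n/2}=-1$, so everything reduces to showing that $\det(c)=1$ for all $c\in C_{O^{\pm}_n(q)}(g)$. Where you diverge is in how that determinant claim is established. The paper does not compute the centralizer at all: it quotes \cite[Proposition 16.24]{CaEn04}, which says the $O^{\pm}_n(q)$-class of $g$ splits into two $SO^{\pm}_n(q)$-classes exactly when $g$ has no elementary divisor $(t\pm1)^k$ with $k$ odd, and splitting is equivalent to $C_{O^{\pm}_n(q)}(g)\leq SO^{\pm}_n(q)$; your route instead verifies the containment directly from Wall's centralizer description, factor by factor over the primary decomposition $V^+\oplus V^-\oplus V^0$. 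Your computation is sound --- even Jordan blocks contribute symplectic factors on $V^{\pm}$, and the $GL$- and $GU$-type factors on $V^0$ have trivial $\F_q$-determinant image by the norm computations you give --- and the ``main obstacle'' you flag is actually not one: the centralizer of $g|_{V^0}$ is a subgroup of the centralizer of its semisimple part (which is a polynomial in $g|_{V^0}$), so its determinant image is contained in that of the larger group, which you have already shown to be $\{1\}$; no further tracking of the unipotent constraints is needed. The trade-off is that your argument is longer but self-contained modulo Wall's structure theory, whereas the paper's is a two-line citation; you also get the ``strongly real $\Rightarrow$ odd divisor'' direction for free from the reality converse, where the paper separately invokes Theorem \ref{slnstrong}(ii) via the embedding into $SL_n(q)$.
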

\begin{proof} We first show that $g$ is strongly real in $SO^{\pm}_n(q)$ if and only if $g$ has an elementary divisor of the form $(t \pm 1)^k$ with $k$ odd.  By Lemma \ref{orthogdet}, if $g$ has an elementary divisor of this form, then a reversing involution can be found in $O^{\pm}_n(q)$ with determinant $1$, so $g$ is strongly real in $SO^{\pm}_n(q)$.  Conversely, if $g$ has no elementary divisor of the form $(t \pm 1)^k$ with $k$ odd, then $g$ cannot be strongly real as an element of $SL_n(q)$ by Theorem \ref{slnstrong}(ii), so $g$ cannot be strongly real in the subgroup $SO^{\pm}_n(q)$.

We now show $g$ is real in $SO^{\pm}_n(q)$ if and only if $g$ has an elementary divisor of the form $(t \pm 1)^k$.  If $g$ has such an elementary divisor, then $g$ is strongly real from the above, and so $g$ is real in $SO^{\pm}_n(q)$.  So assume that $g$ has no such elementary divisor.  Since $n \equiv 2($mod $4)$, it follows from Lemma \ref{orthogdet} that there is an inverting involution $h$ of $g$ such that $h \in O^{\pm}_n(q)$ and $\mathrm{det}(h) = -1$.  Any other inverting element of $g$ in $O^{\pm}_n(q)$ must be of the form $ha$ for some $a \in C_{O^{\pm}_n(q)}(g)$.  By \cite[Proposition 16.24]{CaEn04}, a conjugacy class of $O^{\pm}_n(q)$ which is contained in $SO_n^{\pm}(q)$ splits into two conjugacy classes of $SO_n^{\pm}(q)$ if and only if the elements of that class have no elementary divisors of the form $(t \pm 1)^k$ with $k$ odd.  At the same time, by considering corresponding orbits and stabilizers, a class of $O^{\pm}_n(q)$ contained in $SO_n^{\pm}(q)$ splits into two conjugacy classes of $SO_n^{\pm}(q)$ if and only if the centralizer of those elements in $O^{\pm}_n(q)$ are completely contained in $SO_n^{\pm}(q)$ (as mentioned in \cite[Proof of Proposition 16.23]{CaEn04}).  Thus, any element $a \in C_{O^{\pm}_n(q)}(g)$ satisfies $\mathrm{det}(a) = 1$, and so for any reversing element $ha$ of $g$ in $O^{\pm}_n(q)$, we have $\mathrm{det}(ha) = -1$.  Thus $g$ cannot be real in $SO^{\pm}_n(q)$.
\end{proof}

In combining the statements in Theorems \ref{SUstrong} and \ref{SO2mod4}, we obtain the following result on strong reality in $SU_n(q)$ for $q$ odd, which pleasingly resembles strong reality in $GU_n(q)$ for $q$ odd as in the last part of Theorem \ref{gustrong}. 

\begin{corollary} \label{SUSOstrong}
Let $q$ be odd, and let $g \in SU_n(q)$.  Then $g$ is strongly real in $SU_n(q)$ if and only if $g$ is strongly real as an element of some embedded $SO^{\pm}_n(q)$.
\end{corollary}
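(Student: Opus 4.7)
The plan is to deduce this corollary by combining the classifications in Theorem \ref{SUstrong}, Theorem \ref{gustrong}, Theorem \ref{SO2mod4}, and the remark (based on Lemma \ref{orthogdet}) that when $q$ is odd and either $n$ is odd or $4 \mid n$, every element of $SO^\pm_n(q)$ is strongly real in $SO^\pm_n(q)$. The proof is essentially a bookkeeping exercise splitting into cases on $n \bmod 4$, so I do not anticipate any technical obstacle.

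For the easy (``if'') direction, I would argue as follows. Suppose $g \in SU_n(q)$ is strongly real in some embedded $SO^\pm_n(q)$, say $hgh^{-1} = g^{-1}$ with $h^2 = 1$ and $h \in SO^\pm_n(q)$. Since $SO^\pm_n(q) \subseteq SU_n(q)$ (being the determinant-one subgroup of an embedded $O^\pm_n(q) \subseteq GU_n(q)$), we immediately get $h \in SU_n(q)$, so $g$ is strongly real in $SU_n(q)$.

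For the ``only if'' direction, assume $g \in SU_n(q)$ is strongly real in $SU_n(q)$, hence also strongly real in $GU_n(q)$. By Theorem \ref{gustrong}, $g$ then lies in some embedded $O^\pm_n(q)$, and since $\det(g) = 1$, in fact $g \in SO^\pm_n(q)$. It remains to produce a reversing involution inside $SO^\pm_n(q)$. I would split into the same cases as in Theorem \ref{SUstrong}. If $n$ is odd or $4 \mid n$, then by the remark following Lemma \ref{orthogdet}, every element of $SO^\pm_n(q)$ is strongly real in $SO^\pm_n(q)$, and we are done. If $n \equiv 2 \pmod{4}$, then Theorem \ref{SUstrong}(2) gives that $g$ has an elementary divisor of the form $(t \pm 1)^k$ with $k$ odd, so by Theorem \ref{SO2mod4}, $g$ is strongly real in $SO^\pm_n(q)$.

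The only subtle point worth a sentence of justification is that the embedded $O^\pm_n(q)$ produced by Theorem \ref{gustrong} has its determinant-one subgroup sitting inside $SU_n(q)$, which is immediate from $SO^\pm_n(q) = O^\pm_n(q) \cap SL_n(q^2)$ and $O^\pm_n(q) \subseteq GU_n(q)$. Otherwise, the statement follows purely formally from the earlier results, and no new computation is needed.
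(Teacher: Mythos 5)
Your proposal is correct and follows essentially the same route as the paper, which derives the corollary by combining Theorems \ref{gustrong}, \ref{SUstrong}, and \ref{SO2mod4} together with the observation after Lemma \ref{orthogdet} that every element of $SO^{\pm}_n(q)$ is strongly real when $n$ is odd or $4\mid n$. The paper leaves these case-by-case details implicit, and your write-up supplies them accurately, including the point that the embedded $SO^{\pm}_n(q)$ sits inside $SU_n(q)$.
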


\end{document}